\documentclass[onefignum,onetabnum]{siamonline171218}
\usepackage{amsmath,amsfonts,amssymb,bm}
\usepackage{mathtools}
\usepackage{algorithm}
\usepackage{algpseudocode}
\usepackage{enumitem}
\usepackage{tikz}
\usetikzlibrary{arrows.meta, positioning, decorations.pathreplacing}

\newtheorem{remark}{Remark}
\newtheorem{assumption}{Assumption}

\newcommand{\fXYA}{f_{\boldsymbol{xy}}}
\newcommand{\fXYcondA}{f_{\boldsymbol{xy} \mid \boldsymbol{\alpha}}}

\newcommand{\fXYcondAwithArg}{f_{\boldsymbol{xy} \mid \boldsymbol{\alpha}}(\boldsymbol{X}, \boldsymbol{Y}, t \mid \boldsymbol{\alpha})}
\newcommand{\fXcondAwithArg}{f_{\boldsymbol{x} \mid \boldsymbol{\alpha}}(\boldsymbol{X}, t \mid \boldsymbol{\alpha})}

\newcommand{\RHSwithArg}{\boldsymbol{R}(\boldsymbol{q}(t), t;\boldsymbol{\alpha})}

\newcommand{\qw}[1]{{\color{black}{#1}}}
\newcommand{\del}[1]{}
\newcommand{\dpEul}[1]{\left(\partial_{\boldsymbol{\alpha}}#1\right)_{\boldsymbol{Q}_T}}
\newcommand{\dpLag}[1]{\left(\partial_{\boldsymbol{\alpha}}#1\right)_{\boldsymbol{q}_0}}

\title{Characteristic Sensitivity Ensembles for Inference of Hidden Dynamics from Marginal Observations}

\author{
Qi Wang\thanks{Department of Aerospace Engineering, San Diego State University, San Diego, CA, USA (\texttt{qwang4@sdsu.edu})}
\and
Gustaaf Jacobs\thanks{Department of Aerospace Engineering, San Diego State University, San Diego, CA, USA (\texttt{gjacobs@sdsu.edu})}
}

\begin{document}
\maketitle

\begin{abstract}
A framework is developed for the inference of dynamics described by a  generalized system of ordinary differential equations. A stochastic gradient method is coined that  infers dynamics from observed marginal probability density functions using the joint probability density function of the observable and latent variables.
Diffusion and other irreversible processes observed in a low-dimensional state can be recast as deterministic, reversible flows in a sufficiently augmented state space, where the joint density satisfies the hyperbolic Liouville equation. The marginal distribution observed is the projection of these hyperbolic dynamics onto the observed coordinates, with the latent components carrying the randomness and memory. This reframing allows inference for irreversible or stochastic dynamics into the recovery of a deterministic Ordinary Differential Equation (ODE) from marginal observations.
Instead of solving the high-dimensional Liouville equation for the joint density, the algorithm exploits its characteristic representation.
Particles sampled from the initial distribution are transported along characteristic lines.
The Eulerian sensitivity with respect to parameters is obtained by sensitivity propagation along the characteristic lines, with a crossed U-statistic producing an unbiased gradient estimator, which enables stochastic gradient descent.
Four experiments validate the method: recovery of a three-mode linear system observed through the marginal of a single mode; a nonlinear Gompertz growth model with a hidden mode; a bistable system whose hidden mode turns a unimodal marginal bimodal; and Stokes--Oseen drag law recovery for particles in a cellular flow.
Convergence behavior is analyzed across these settings.
\end{abstract}

\begin{keywords}
Liouville equation, stochastic gradient descent, marginal distributions, inverse problems, method of characteristics
\end{keywords}

\begin{AMS}
Primary 65C05, 65K10; Secondary 37N30, 62M05
\end{AMS}

\maketitle

\section{Introduction}
\label{sec:intro}

In machine learning methods and data-driven models, it is common to parametrically encode an unknown independent, potentially random variable in a response surface, flow map, and/or dynamical system with the purpose of inferring or learning it from observables.  Defining the observable variable as $\boldsymbol{x}(t) \in \mathbb{R}^{d_x}$, a generalized dynamical system of linear or non-linear ODEs  that encodes an unknown parametric vector  $\boldsymbol{\alpha} \in \mathbb{R}^p$   can be defined as  
\begin{eqnarray}
\dot{\boldsymbol{q}}(t) &=& \RHSwithArg, \\
\boldsymbol{q}(0) & \coloneqq &  \boldsymbol{q}_0 
\label{eqn:ODE}
\end{eqnarray}
where $\boldsymbol{q}(t) = (\boldsymbol{x}(t), \boldsymbol{y}(t)) \in \mathbb{R}^{d}$ is the state vector and $ \boldsymbol{q}_0$ its initial condition. The state vector may depend on the latent variable $\boldsymbol{y}(t) \in \mathbb{R}^{d_y}$, which describes the variables that are not observable or hidden. The dimension of the state vector $d=d_x+d_y$. The right-hand side $\boldsymbol{R}:\mathbb{R}^d \times \mathbb{R}^+ \to \mathbb{R}^d$ is deterministic and depends on the state variable conditioned on the deterministic $\boldsymbol{\alpha}$.

If the initial condition is known only statistically; for example, for chaotic velocity fields, clouds of tracers or for initial conditions that are known with a confidence interval, then  $\boldsymbol{q}_0$ and thus $\boldsymbol{q}(t)$ is described by a probability density function, $\fXYcondAwithArg $. 
Here the lower-case symbols $\boldsymbol{x}(t),\boldsymbol{y}(t)$ and $\boldsymbol{q}(t)$ denote deterministic trajectories of state variables, while upper-case symbols $\boldsymbol{X},\boldsymbol{Y}$ and $\boldsymbol{Q}$ denote the corresponding state-space coordinates used to describe the ensemble distribution of such trajectories.
Using the Liouville theorem and \eqref{eqn:ODE} 
a corresponding Liouville equation 
\begin{eqnarray}
\frac{\partial \fXYcondA}{\partial t} + \nabla_{\boldsymbol{Q}} \cdot \bigl(\fXYcondA \boldsymbol{R}\bigr) = 0, \\
\fXYcondA(\boldsymbol{Q}, 0)\coloneqq f_0,
\label{eq:Liouville_general}
\end{eqnarray}
can be derived  (e.g. \cite{pavliotis2014stochastic, 10.1063/5.0207403}), which describes the transport of  $\fXYcondAwithArg $.
The same vector field \(\boldsymbol{R}\) is evaluated either along a deterministic trajectory,
as \(\boldsymbol{R}(\boldsymbol{q}(t),t;\boldsymbol{\alpha})\), or at a state-space
coordinate, as \(\boldsymbol{R}(\boldsymbol{Q},t;\boldsymbol{\alpha})\), as used in the Liouville equation.
The operator $\nabla_{\boldsymbol{Q}}$ denotes differentiation in the state space\del{variable}. 
Note that the observable probability density function, $f_{\boldsymbol{x}|\boldsymbol{\alpha}}(\boldsymbol{X}, t|\boldsymbol{\alpha})$ is   obtained as the marginal of the joint pdf as follows:
\begin{equation} f_{\boldsymbol{x}|\boldsymbol{\alpha}}(\boldsymbol{X}, t|\boldsymbol{\alpha})
= \int_{\mathbb{R}^{d_y}} \fXYcondAwithArg\, d\boldsymbol{Y}. 
\end{equation}

\begin{figure}[h!]
    \centering
    \includegraphics[width=0.9\linewidth]{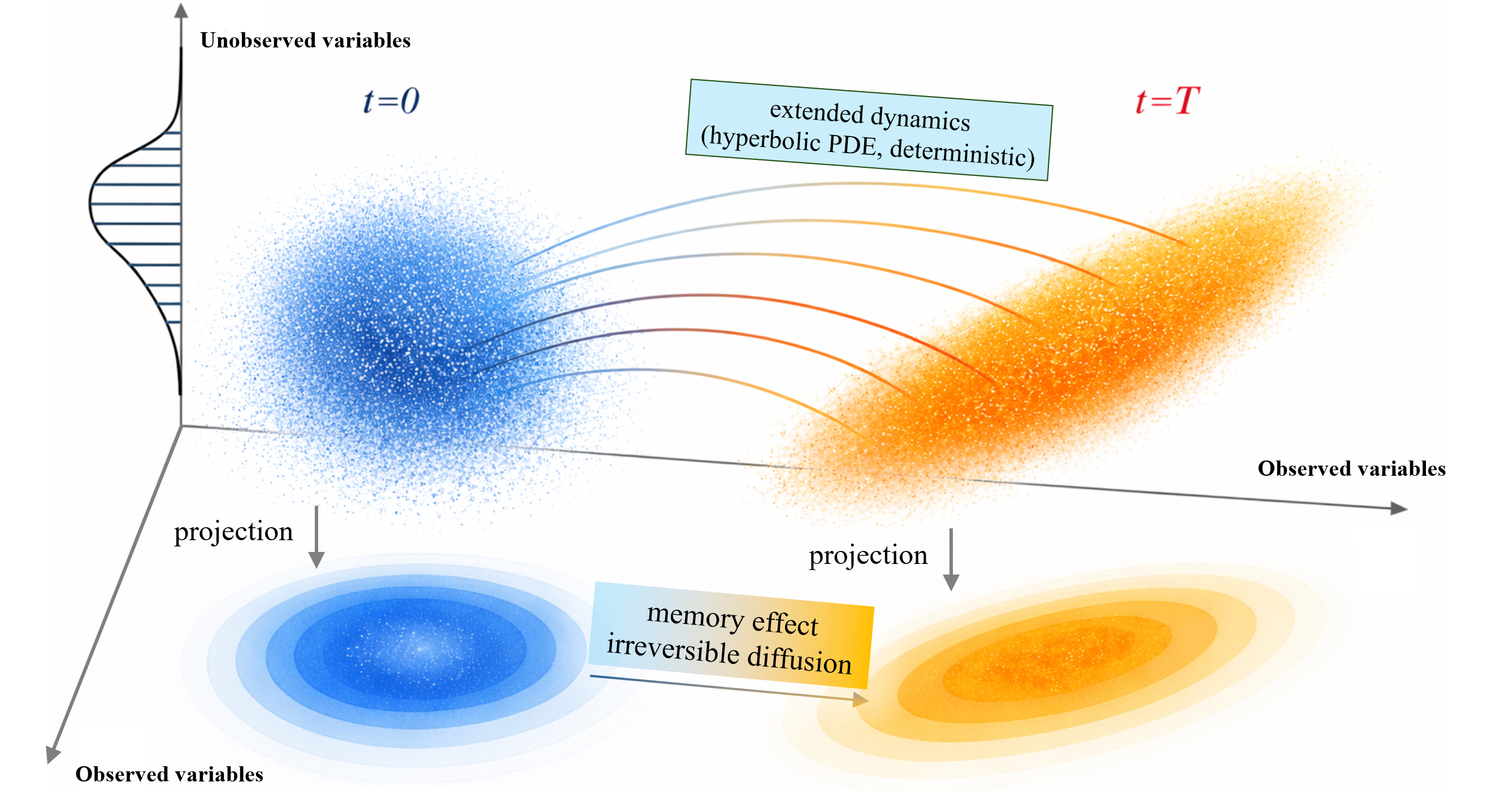}
    \caption{Deterministic augmented dynamics in extended phase space versus the irreversible, memory-dependent projected dynamics in the observed space.}
    \label{fig:schematic}
\end{figure}

In data-driven methods the goal is to infer $\boldsymbol{\alpha}$ from the observable density, $f_{\boldsymbol{x}|\boldsymbol{\alpha}}(\boldsymbol{X}, t|\boldsymbol{\alpha})$, as schematically illustrated in figure \ref{fig:schematic}. The role of the latent joint probability density function and the marginalization to the observable probability density function in methods of inference has been investigated in the context of a wide range of fields and applications. In operator theory, for example, Sz.-Nagy's dilation theorem \cite{nagy1970harmonic} guarantees that every contraction (equivalent to marginalization) on a Hilbert space is the compression of a unitary operator acting on an enlarged space (or the higher dimensional space of the joint density). In quantum mechanics, every open (dissipative) system can be modeled as a closed (unitary) system coupled to an environment \cite{breuer2002open}.
The Mori-Zwanzig formalism \cite{mori1965transport,zwanzig1961memory} provides a concrete expression of this principle in dynamical systems: if the hidden variables $\boldsymbol{y}$ are projected out, the observable $\boldsymbol{x}$ satisfies a generalized Langevin equation with memory, colored noise, and effective stochasticity absent in the original full dynamics.
The same structure appears in probabilistic modeling: hidden Markov models \cite{rabiner1989hmm}, latent ODEs \cite{rubanova2019latent}, and latent diffusion models \cite{rombach2022high} all introduce latent variables to explain non-Markovian or non-Gaussian behavior in observed processes.

While the lifted or marginal density of non-linear dynamics are governed by a closed forward system, the inverse solution is substantially more challenging, i.e, the inference of the parameters $\boldsymbol{\alpha}$ of the full-dimensional deterministic system, given only partial observations of the marginal density at discrete times, is inherently ill-posed. A  projection onto the observed variables loses information for the hidden degrees of freedom, which means a unique reconstruction is not guaranteed. In practice, the marginal density is observed only through empirical distribution or summary statistics such as moments. Furthermore, the projected dynamics obey a non-Markovian integro-differential equation, limiting the applicability of classical control and inverse methods.

Many methods for parameter inference for dynamical systems are formulated based on a state or parameter estimation driven by trajectory-level measurements, using adjoint methods \cite{kgb6-k3zm, Wang_Zaki_2025, lions1971optimal,dominguez2022inference}, ensemble Kalman filters \cite{evensen2009data}, or variational data assimilation \cite{Wang_Wang_Zaki_2022, kushner2003stochastic}.
These approaches typically rely on measurements that can be associated with trajectories. While distributions can be sampled using an ensemble, these methods do not directly accommodate marginal observations without trajectory correspondence.

A natural alternative is to work with a probability density function, using the Liouville equation for deterministic dynamics or the Fokker-Planck equation for stochastic dynamics \cite{risken1996fokkerplanck,10.1063/5.0207403}.
Direct inference that matches predicted and observed densities is computationally infeasible in high dimensions, because the density lives in the full lifted phase space $\mathbb{R}^d$; for most real-world applications, grid-based density solvers are out of the question.
Moment-closure and reduced models project the density evolution onto low-dimensional summaries \cite{sun2019moment}, but they introduce modeling bias that is difficult to control systematically for nonlinear dynamics.

Data-driven methods face analogous difficulties.
Neural ODEs \cite{chen2018neural} and latent ODEs \cite{rubanova2019latent}
learn dynamical systems by fitting trajectories or likelihoods of observed states. Their training relies on auxiliary inference networks to reconstruct the latent dynamics, and their training typically optimizes a variational lower bound (ELBO) involving an additional likelihood model.
Marginal distribution observations fall out of the scope and generally require additional modeling assumptions.
Continuous normalizing flows \cite{papamakarios2021normalizing,kobyzev2020normalizing} evolve densities along ODE trajectories effectively by solving the density factor along the trajectories, but they learn transport maps between fully observed distributions rather than inferring dynamics from marginal projections.
Simulation-based inference methods \cite{cranmer2020frontier} circumvent the density PDE by comparing simulated and observed summary statistics, but they typically require many forward simulations and provide no gradient information.
Stein variational gradient descent \cite{liu2016stein} uses kernel-based density matching to transport particles toward a target distribution, but assumes the target score is known.

The Schr\"odinger bridge problem (SBP) provides a complementary framework based on distributions.
The classical SBP can be viewed as a stochastic version of the optimal mass transport (OMT) problem \cite{benamou2000computational}, and seeks a stochastic process that matches two prescribed marginal distributions at fixed times while remaining as close as possible to a reference process, typically Brownian motion. 
Recent neural variants \cite{nodozi2024neural} parameterize the drift and diffusion with neural networks and train them with a Sinkhorn-style loss function.
The resulting Euler-Lagrange conditions of the Schr\"odinger bridge problem consist of a forward Fokker-Planck equation coupled with a backward equation governing the log-density gradient.
These coupled equations are solved numerically using physics-informed neural networks (PINNs) \cite{farea2024understanding}.
Closely related ideas appear in score-based diffusion models and flow-matching methods, which also construct stochastic or probability flow dynamics consistent with prescribed distributions. In particular, the score-based diffusion model \cite{song2020sde,song2021score} successfully exploits the irreversible structure within an irreversible process. Anderson's reversal theorem \cite{anderson1982reverse} guarantees that the spatial score $\nabla_{\boldsymbol{x}}\log p_t(\boldsymbol{x})$ alone is enough to invert an apparently irreversible noising process.
Learning this single log-density derivative is what turns the forward noising into a tractable backward generative model.
Flow-matching methods \cite{lipman2023flow} achieve the same forward objective by sidestepping log-densities and regressing the velocity field on interpolant pairs.
These approaches rely on an underlying stochastic reference dynamics and are not designed to infer latent dynamics from partial observations or to handle projection-induced non-Markovian effects.


In \cite{daniel2024liouville}, it was shown that a randomly forced system of ordinary differential equations gives rise to a hyperbolic Liouville equation, which can be used to model systems with non-Gaussian uncertainties and diffusion processes. More generally, many dynamical particle systems with random initial conditions are governed by hyperbolic equations for the probability density function (PDF). For example, the Vlasov equation describes the evolution of the PDF of collisionless plasmas \cite{birdsall}. The characteristics of these hyperbolic equations transport the PDF along characteristic trajectories and admit a well-posed inverse mapping. Indeed, tracing solutions along characteristics in both forward and backward time forms the basis of characteristic-based numerical methods, including the forward characteristic method described in \cite{dominguez2024high} and semi-Lagrangian methods \cite{natarajan2021high}. 
The characteristic formulation also offers a potential way to mitigate the curse of dimensionality, since only individual trajectories need to be evolved rather than discretizing the full phase space. Moreover, unlike moment-based approaches, it avoids the closure problem because the complete PDF is represented through its characteristic flow.

In this paper, a method for inferring latent dynamics is proposed based on a hyperbolic lifting of the marginal probability density function of observables. The method
leverages the characteristic form of the Liouville equation governing the joint probability density function. 
Characteristics, sampled from a random initial distribution, are used to determine the Eulerian derivative with respect to the parameter $\boldsymbol{\alpha}$
by freezing the state space solution  $\boldsymbol{q}_T$ at the observation time, $T$, and performing an automatic differentiation on the backward characteristic lines.
A weak loss that uses test functions on the predicted and observed marginal densities, and an exactly unbiased stochastic gradient is constructed via a crossed U-statistic over small sets of $N$ characteristics. A theorem is developed that provides an estimate of the sensitivity of the set with respect to the parameter along the backward characteristic.
The gradient variance scales as $O(1/N)$ and grows with the time horizon at a rate governed by the Lipschitz constant of the vector field, a phenomenon related to the growth of tangent dynamics and Lyapunov exponents in chaotic systems \cite{leith1974theoretical}.
Full-matrix preconditioning handles the anisotropic gradient noise that arises from partially observed dynamics. We prove convergence of the resulting stochastic gradient iteration.
For identifiable parametric models, the convergence rate is governed by the condition number of the sensitivity Jacobian, scaling quadratically with its inverse conditioning.

The paper is organized as follows. In section ~\ref{sec:forward_characteristic}, the characteristic form of the probability density function equation and its solution conditioned on the parameter $\boldsymbol{\alpha}$ are discussed. The inverse problem is posed in section~\ref{sec:problem_setup} followed by the development of the theorem for the characteristic-based sensitivity formula, a weak loss function, and a local identifiability analysis.  After a summary of the algorithm, Section~\ref{sec:algorithm} proves convergence and discusses stability and accuracy. Section~\ref{sec:experiments} presents the four numerical experiments.

\section{Characteristic forward problem}
\label{sec:forward_characteristic}

\paragraph{Notation}
In the Liouville formulation, uppercase symbols $\boldsymbol{Q}=(\boldsymbol{X},\boldsymbol{Y})$ denote Eulerian  arguments or coordinates of the probability density function, whereas lowercase symbols $\boldsymbol{q}(t)=(\boldsymbol{x}(t),\boldsymbol{y}(t))$ denote Lagrangian trajectories or sampled particles.
At the terminal time $T$, $\boldsymbol{Q}_T$ can thus be interpreted as  a prescribed terminal coordinate held fixed under an Eulerian parameter derivative, while $\boldsymbol{q}_0$ and $\boldsymbol{q}_T$ are sampled initial and terminal states.

Denoting differentiation in the parameter $\boldsymbol{\alpha}$ as $\partial_{\boldsymbol{\alpha}}$, we assume local regularity as follows:
\begin{assumption}[Local regularity]
\label{ass:char_regularity}
The admissible parameter set $\mathcal{A}\subset\mathbb{R}^p$ is open.
The vector field $\boldsymbol{R}:\mathbb{R}^d\times[0,T] \times \mathcal{A}\to\mathbb{R}^d$ is $C^1$ in $(\boldsymbol{q},t,\boldsymbol{\alpha})$ and globally Lipschitz in $\boldsymbol{q}$ uniformly in $(t,\boldsymbol{\alpha})$ with Lipschitz constant $L_R:=\sup_{(\boldsymbol{q},t,\boldsymbol{\alpha})}\|\nabla_{\boldsymbol{q}}\boldsymbol{R}(\boldsymbol{q},t;\boldsymbol{\alpha})\|<\infty$.
The divergence $h_{\boldsymbol{\alpha}}(\boldsymbol{q},t):=\nabla_{\boldsymbol{q}}\!\cdot\boldsymbol{R}(\boldsymbol{q},t;\boldsymbol{\alpha})$ is $C^1$ in $(\boldsymbol{q},\boldsymbol{\alpha})$, with $\partial_{\boldsymbol{\alpha}}h_{\boldsymbol{\alpha}}$ and $\nabla_{\boldsymbol{q}}h_{\boldsymbol{\alpha}}$ uniformly bounded in $(t,\boldsymbol{\alpha})$ on every bounded set of states.
The probability density  function integrates to unity, $\int_{\mathbb{R}^d}\fXYA(\boldsymbol{Q},0)\,d\boldsymbol{Q}=1$,  is $C^1$ and strictly positive on the interior $\Omega^{\!\circ}$ of its support $\Omega\subseteq\mathbb{R}^d$, with $\mathbb{E}_{\boldsymbol{q}_0\sim f_0}\|\nabla\log f_0(\boldsymbol{q}_0)\|^2<\infty$, the gradient being taken on $\Omega^{\!\circ}$ (which has full $f_0$-measure).
\end{assumption}

Under Assumption~\ref{ass:char_regularity}, the Picard-Lindel\"of theorem~\cite{murray2013existence} yields, for every $(\boldsymbol{q}_0,\boldsymbol{\alpha})\in\mathbb{R}^d\times\mathcal{A}$, a unique global solution of~\eqref{eqn:ODE} described by the  flow map
\begin{equation}
\Phi_t(\cdot;\boldsymbol{\alpha}):\mathbb{R}^d\to\mathbb{R}^d,
\qquad
\Phi_t(\boldsymbol{q}_0;\boldsymbol{\alpha}):=\boldsymbol{q}(t;\boldsymbol{q}_0,\boldsymbol{\alpha}),
\label{eq:flow_map}
\end{equation}
that is  continuously differentiable and depends  on $(\boldsymbol{q}_0,\boldsymbol{\alpha})$ and is, for each fixed $(\boldsymbol{\alpha},t)\in\mathcal{A}\times[0,T]$, a $C^1$ diffeomorphism with $C^1$ inverse $\Phi_t^{-1}(\cdot;\boldsymbol{\alpha})$.
The probability density function $\fXYcondA((\cdot,t)\mid\boldsymbol{\alpha}):=(\Phi_t(\cdot;\boldsymbol{\alpha}))_{\#}f_0$ in $C^1(\mathbb{R}^d\times[0,T])$ is the solution of the Liouville equation (\ref{eq:Liouville_general}).
The equivalent non-conservative form of the Liouville equation, 
is obtained using the product rule leading to
\begin{equation}
\bigl(\partial_t+\boldsymbol{R}\!\cdot\!\nabla_{\boldsymbol{Q}}\bigr)\,\fXYcondA \coloneqq \frac{D \fXYcondA }{Dt}
=
-h_{\boldsymbol{\alpha}}\,\fXYcondA,
\label{eq:liouville-nonconservative}
\end{equation}
The operator $D/Dt$ is the Lagrangian material derivative that transports $\fXYcondA$ along characteristic manifolds in the $(\boldsymbol{q}, t)$ space with characteristic velocity $\boldsymbol{R}$.
The source term $-h_{\boldsymbol{\alpha}}\fXYA$ on the right-hand side is interpreted as the effect of the local volume change on the flow map.

Note that the solution to the dynamical system~\eqref{eqn:ODE}  coincides with the characteristic manifold  of the
Liouville equation.
Therefore, the flow map
$\Phi_t(\boldsymbol{q}_0;\boldsymbol{\alpha})$ is a characteristic trace through the Eulerian coordinate
$\boldsymbol{Q}$, i.e., $\boldsymbol{Q}=\Phi_t(\boldsymbol{q}_0;\boldsymbol{\alpha})=\boldsymbol{q}(t)$, along
every characteristic. 
Thus, $\boldsymbol{Q}$ and $\boldsymbol{q}(t)$ have the same value along a characteristic but they should not be used interchangeably as the variables have different interpretations: $\boldsymbol{Q}$ marks the independent coordinate on which the probability density function depends, whereas $\boldsymbol{q}(t)$ marks the sampled trajectory along which it is transported.

Using this property of the characteristic form of the probability density function equation,  we can find the following characteristic solution:
\begin{lemma}[Characteristic representation of the Liouville solution]
\label{lem:char_representation}
Under Assumption~\ref{ass:char_regularity}, the probability  density function conditioned on the initial parameter $\alpha$  can be determined from its initial condition, for every $\boldsymbol{q}_0\in\Omega^{\!\circ}$, $T\geq 0$, and $\boldsymbol{\alpha}\in\mathcal{A}$, as follows,
\begin{equation}
\label{eq:char-solution}
\fXYcondA\bigl(\Phi_T(\boldsymbol{q}_0;\boldsymbol{\alpha}),T\mid\boldsymbol{\alpha}\bigr)
=
f_0(\boldsymbol{q}_0)\,
\exp\!\Bigl(-\!\int_0^T h_{\boldsymbol{\alpha}}\bigl(\Phi_s(\boldsymbol{q}_0;\boldsymbol{\alpha}),s\bigr)\,ds\Bigr).
\end{equation}
Equivalently, for every $\boldsymbol{q}(T)\coloneqq\boldsymbol{q}_T\in\mathbb{R}^d$ and $\boldsymbol{q}_0=\Phi_T^{-1}(\boldsymbol{q}_T;\boldsymbol{\alpha})$,
\begin{equation}
\label{eq:char-solution-terminal}
\log \fXYcondA(\boldsymbol{Q}_T,T \mid \boldsymbol{\alpha})
=
\log f_0(\boldsymbol{q}_0)
-\int_0^T h_{\boldsymbol{\alpha}}\bigl(\Phi_s(\boldsymbol{q}_0;\boldsymbol{\alpha}),s\bigr)\,ds.
\end{equation}
\end{lemma}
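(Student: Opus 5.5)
The plan is to combine the pushforward definition $\fXYcondA(\cdot,T\mid\boldsymbol{\alpha})=(\Phi_T(\cdot;\boldsymbol{\alpha}))_{\#}f_0$ with the change-of-variables formula and Liouville's (Jacobi's) formula for the Jacobian determinant of the flow map. Fix $\boldsymbol{\alpha}\in\mathcal{A}$ and write $J(t;\boldsymbol{q}_0):=\det\nabla_{\boldsymbol{q}_0}\Phi_t(\boldsymbol{q}_0;\boldsymbol{\alpha})$. Since $\Phi_T$ is a $C^1$ diffeomorphism, the pushforward density is
\begin{equation*}
\fXYcondA(\boldsymbol{Q},T\mid\boldsymbol{\alpha}) = f_0\bigl(\Phi_T^{-1}(\boldsymbol{Q};\boldsymbol{\alpha})\bigr)\,\bigl|J\bigl(T;\Phi_T^{-1}(\boldsymbol{Q};\boldsymbol{\alpha})\bigr)\bigr|^{-1}.
\end{equation*}
Evaluating this at $\boldsymbol{Q}=\Phi_T(\boldsymbol{q}_0;\boldsymbol{\alpha})$ gives $f_0(\boldsymbol{q}_0)/|J(T;\boldsymbol{q}_0)|$. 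It therefore suffices to show
\begin{equation*}
J(T;\boldsymbol{q}_0)=\exp\Bigl(\int_0^T h_{\boldsymbol{\alpha}}(\Phi_s(\boldsymbol{q}_0;\boldsymbol{\alpha}),s)\,ds\Bigr),
\end{equation*}
which is positive, so the absolute value can be dropped.

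For the Jacobian identity, differentiate the ODE \eqref{eqn:ODE} with respect to $\boldsymbol{q}_0$. This is legitimate because the flow is $C^1$ in $\boldsymbol{q}_0$ and $\boldsymbol{R}$ is $C^1$. The result is the variational equation
\begin{equation*}
\partial_t\nabla_{\boldsymbol{q}_0}\Phi_t=\nabla_{\boldsymbol{q}}\boldsymbol{R}(\Phi_t,t;\boldsymbol{\alpha})\,\nabla_{\boldsymbol{q}_0}\Phi_t,\qquad \nabla_{\boldsymbol{q}_0}\Phi_0=I.
\end{equation*}
Jacobi's formula $\frac{d}{dt}\det M=\det M\,\mathrm{tr}(M^{-1}\dot M)$ then yields the scalar linear ODE $\dot J=\mathrm{tr}(\nabla_{\boldsymbol{q}}\boldsymbol{R})J=h_{\boldsymbol{\alpha}}(\Phi_t,t)J$ with $J(0)=1$. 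Its unique solution is the claimed exponential. By the global Lipschitz bound in Assumption~\ref{ass:char_regularity}, $|h_{\boldsymbol{\alpha}}|\le d\,L_R$, so the integral is finite for every $T$.

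As a cross-check that ties the result to \eqref{eq:liouville-nonconservative}, one can instead compute $\frac{d}{dt}\fXYcondA(\Phi_t(\boldsymbol{q}_0),t\mid\boldsymbol{\alpha})$ directly. By the chain rule this equals $\frac{D\fXYcondA}{Dt}=-h_{\boldsymbol{\alpha}}\fXYcondA$ along the characteristic, and integrating the linear ODE from $f_0(\boldsymbol{q}_0)$ gives \eqref{eq:char-solution}. I would present the pushforward argument as the main proof, because it avoids needing a priori $C^1$ regularity of the density to apply the chain rule. The nonconservative derivation can serve as a remark.

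For the second form \eqref{eq:char-solution-terminal}, take any $\boldsymbol{q}_T\in\mathbb{R}^d$ and set $\boldsymbol{q}_0=\Phi_T^{-1}(\boldsymbol{q}_T;\boldsymbol{\alpha})$, which is well defined because $\Phi_T$ is bijective. Then substitute into \eqref{eq:char-solution} with $\boldsymbol{Q}_T=\boldsymbol{q}_T$ and take logarithms.

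The main obstacle is the support issue. The logarithm requires $f_0(\boldsymbol{q}_0)>0$, which holds on $\Omega^{\circ}$. For $\boldsymbol{q}_T$ whose preimage lies outside $\Omega^{\circ}$, both sides equal $-\infty$, so the identity is to be read in the extended sense or restricted to $\Phi_T(\Omega^{\circ};\boldsymbol{\alpha})$. That set carries full mass, because $\Phi_T$ maps null sets to null sets. I would state this restriction explicitly.
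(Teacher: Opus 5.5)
Your proof is correct, but your main argument takes a different route from the paper. The paper's proof is essentially your ``cross-check''. It sets $\eta(t)=\fXYcondA(\Phi_t(\boldsymbol{q}_0;\boldsymbol{\alpha}),t\mid\boldsymbol{\alpha})$ and reads the nonconservative Liouville equation \eqref{eq:liouville-nonconservative} along the characteristic as $\dot\eta=-h_{\boldsymbol{\alpha}}\eta$. It then integrates $\frac{d}{dt}\log\eta=-h_{\boldsymbol{\alpha}}$ and substitutes $\boldsymbol{q}_0=\Phi_T^{-1}(\boldsymbol{q}_T;\boldsymbol{\alpha})$.

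That takes two lines, but it presupposes two things the paper asserts rather than derives: that the density is $C^1$ in $(\boldsymbol{Q},t)$, and that it solves the Liouville equation classically, so the chain rule applies along $\Phi_t$. This is delicate, for instance, for the uniform $f_0$ of Remark~\ref{rem:uniform_f0}. Your route starts from the definition $\fXYcondA(\cdot,T\mid\boldsymbol{\alpha})=(\Phi_T)_{\#}f_0$ and needs only the diffeomorphism property of the flow and the variational equation. Through Jacobi's formula it also identifies $\exp\bigl(\int_0^T h_{\boldsymbol{\alpha}}(\Phi_s,s)\,ds\bigr)$ with $\det\nabla_{\boldsymbol{q}_0}\Phi_T$, which makes precise the paper's informal reading of the source term $-h_{\boldsymbol{\alpha}}\fXYcondA$ as local volume change.

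The price is that the pushforward fixes the density only almost everywhere. For the pointwise identity at $\Phi_T(\boldsymbol{q}_0;\boldsymbol{\alpha})$ you should say explicitly that you take $f_0\circ\Phi_T^{-1}\,|\det\nabla\Phi_T^{-1}|$ as the representative. Alternatively, invoke continuity of $\fXYcondA(\cdot,T\mid\boldsymbol{\alpha})$ on the open set $\Phi_T(\Omega^{\!\circ};\boldsymbol{\alpha})$.

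Your handling of the support is more careful than the paper's, which asserts \eqref{eq:char-solution-terminal} for every $\boldsymbol{q}_T\in\mathbb{R}^d$ even though the logarithm may be $-\infty$. One small correction: when the preimage lies on $\partial\Omega$, $f_0$ need not vanish (e.g.\ a uniform density on a closed box). So ``both sides equal $-\infty$'' is not literally right there, although with your representative the identity still holds in the extended sense.
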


\begin{proof}
Set $\eta(t):=\fXYcondA(\Phi_t(\boldsymbol{q}_0;\boldsymbol{\alpha}),t\mid\boldsymbol{\alpha})$, so that $\eta(0)=f_0(\boldsymbol{q}_0)>0$.
Along the characteristic,~\eqref{eq:liouville-nonconservative} reads $\dot\eta(t)=-h_{\boldsymbol{\alpha}}(\Phi_t,t)\,\eta(t)$, where $\eta$ stays strictly positive on $[0,T]$ and $\log\eta$ is $C^1$ with $\frac{d}{dt}\log\eta=-h_{\boldsymbol{\alpha}}(\Phi_t,t)$.
Integration over $[0,T]$ yields~\eqref{eq:char-solution}; substituting $\boldsymbol{q}_0=\Phi_T^{-1}(\boldsymbol{q}_T;\boldsymbol{\alpha})$ obtains~\eqref{eq:char-solution-terminal}.
\end{proof}

\section{Inverse problem - parameter inference}
\label{sec:problem_setup}

The lifted state splits into an observed and a latent block, written $\boldsymbol{q}=(\boldsymbol{x},\boldsymbol{y})$ along a trajectory and $\boldsymbol{Q}=(\boldsymbol{X},\boldsymbol{Y})$ in state space, with $\boldsymbol{x},\boldsymbol{X}\in\mathbb{R}^{d_x}$ observed, $\boldsymbol{y},\boldsymbol{Y}\in\mathbb{R}^{d_y}$ latent, and $d=d_x+d_y$.
The canonical projection onto the observed block,
\begin{equation}
\label{eq:projection}
\Pi_{\boldsymbol{X}}:\mathbb{R}^d\to\mathbb{R}^{d_x},
\qquad
(\boldsymbol{X},\boldsymbol{Y})\mapsto\boldsymbol{X},
\end{equation}
acts on a state-space coordinate and on the terminal point of a trajectory alike, since it is the same linear map on $\mathbb{R}^d$ in both cases.

Inference of the parameter $\boldsymbol{\alpha}$ is based on the observable marginal probability density distribution function at time $t$

\begin{equation}
\fXcondAwithArg
:=
\int_{\mathbb{R}^{d_y}} \fXYcondAwithArg \,d\boldsymbol{Y},
\qquad
(\boldsymbol{X},t)\in\mathbb{R}^{d_x}\times[0,T].
\label{eq:marginal}
\end{equation}
We assume that $f_{\boldsymbol x\mid\boldsymbol{\alpha}}((\cdot,t) | \boldsymbol{\alpha})$ is weakly differentiable with respect to $\boldsymbol{\alpha}$ and that
$\partial_{\boldsymbol{\alpha}} f_{\boldsymbol x | \alpha}((\cdot,t)\mid\boldsymbol{\alpha})
\in L^1(\mathbb{R}^{d_x})
$
for all $t\in[0,T]$ and $\boldsymbol{\alpha}\in\mathcal{A}$.
With the parameter-to-marginal forward map 
\begin{equation}
\mathcal{F}_t:\mathcal{A}\to L^1(\mathbb{R}^{d_x}),
\qquad
\mathcal{F}_t(\boldsymbol{\alpha}):= 
f_{\boldsymbol{x}| \boldsymbol{\alpha}}(\cdot,t\mid\boldsymbol{\alpha}).
\label{eq:forward_map}
\end{equation}
we can define the inverse problem as follows.

\begin{definition}[Inverse problem]
Given observed marginal probability density distribution function data 
$\left\{\tilde{f}_{\boldsymbol{x}}(\boldsymbol{X},T_m),  m=1 \ldots M\right\}$ at times $0<T_1<\cdots<T_m<\cdots < T_M$, the inverse problem is to find $\boldsymbol{\alpha}^\star\in\mathcal{A}$ such that
\begin{equation}
\label{eq:inverse_problem}
\boxed{
\mathcal{F}_{T_m}(\boldsymbol{\alpha}^\star)=\tilde{f}_{\boldsymbol{x}}(\boldsymbol{X},T_m),
\qquad m=1,\ldots,M.
}
\end{equation}
\end{definition}
An exact measurement of  ~\eqref{eq:marginal} is not feasible in practice. 
Because the marginal density is accessible only through finitely many empirical samples rather than an exact measurement, we compare distributions through their moments against a family of test functions, which remain well defined and stable under sampling noise. At time $T_m$, the moment with respect to a test function $\varphi_{k}$ is:
\begin{equation}
\tilde{\mu}_{k,m}:=\int_{\mathbb{R}^{d_x}}\varphi_{k}(\boldsymbol{X})\,\tilde{f}_{\boldsymbol{x}}(\boldsymbol{X},T_m)\,d\boldsymbol{X} ,
\end{equation}
where $\{\varphi_{k}\}_{k,1}^{K}\subset C_b(\mathbb{R}^{d_x})$  are continuous and bounded test functions (for instance, Hermite functions, compactly supported wavelets, or random Fourier features with growing bandwidth). The 
$\ell^2$ norm with respect to the forward model moment provides the following weak  
measure for the  data-misfit 
\begin{equation}
\label{eq:weak-loss}
\mathcal{J}_{K,M}(\boldsymbol{\alpha})
:=
\frac{1}{2}\sum_{m=1}^M\sum_{k=1}^K\bigl(\tilde{\mu}_{k,m}-\mu_{k,m}(\boldsymbol{\alpha})\bigr)^2.
\end{equation}
If there is only one measurement time ($M=1$), the short-hand notation to drop $m$ and $M$ is adopted.
An important explanation for the moment, in terms of expectation, is as follows,
\begin{equation}
\begin{aligned}
\label{eq:forward-moment}
\mu_{k,m}&:=\int_{\mathbb{R}^{d_x}}\varphi_{k}(\boldsymbol{X})\,f_{\boldsymbol{x}\mid\boldsymbol{\alpha}}(\boldsymbol{X},T_m \mid \boldsymbol{\alpha})\,d\boldsymbol{X}\\
 &=\int_{\mathbb{R}^{d_x}}\varphi_{k}(\boldsymbol{X})\,\int_{\mathbb{R}^{d_y}}\fXYcondA(\boldsymbol{X},\boldsymbol{Y},T_m \mid \boldsymbol{\alpha})\,d\boldsymbol{Y}d\boldsymbol{X}\\ &=\int_{\mathbb{R}^{d_x}}\int_{\mathbb{R}^{d_y}}\varphi_{k}(\boldsymbol{X})\,\fXYcondA(\boldsymbol{Q},T_m \mid \boldsymbol{\alpha})\,d\boldsymbol{X}d\boldsymbol{Y}\\
&=\int_{\mathbb{R}^{d}}\varphi_{k}(\Pi_{\boldsymbol{X}}\boldsymbol{Q})\,\fXYcondA(\boldsymbol{Q},T_m\mid \boldsymbol{\alpha})\,d\boldsymbol{Q}\\ &=\int_{\mathbb{R}^{d}}\varphi_{k}\!\left[\Pi_{\boldsymbol{X}}\Phi_{T_m}(\boldsymbol{Q}_0;\boldsymbol{\alpha})\right]\,f_0(\boldsymbol{Q}_0)\,d\boldsymbol{Q}_0\\ &=\int_{\mathbb{R}^{d}}\varphi_{k}\left[\Pi_{\boldsymbol{X}}\Phi_{T_m}(\boldsymbol{q}_0;\boldsymbol{\alpha})\right]\,f_0(\boldsymbol{q}_0)\,d\boldsymbol{q}_0 = \mathbb{E}_{\boldsymbol{q}_0\sim f_0}\!\Bigl[\varphi_k\bigl(\Pi_{\boldsymbol{X}}\Phi_{T_m}(\boldsymbol{q}_0;\boldsymbol{\alpha})\bigr)\Bigr]
\end{aligned}
\end{equation}
The operator $\Pi_{\boldsymbol{X}}$ projects the flow map onto the space defined by the observables $\boldsymbol{X}$. The expectation of the $k^{th}$ test function evaluated along the characteristic flow map provides the forward model evaluation of the weak moment $\tilde{\mu}_k(\boldsymbol{\alpha})$ that is compared to the weak loss in (\ref{eq:weak-loss}).
Based on this loss function,  we can define the inverse problem based on a weak loss as follows:

\begin{definition}[Inverse problem with Weak Loss]
Given the data of the observed marginal probability density distribution function 
$\left\{\tilde{f}_{\boldsymbol{x}}(\boldsymbol{X}, T_m),  m=1 \ldots M\right\}$ at times $0<T_1<\cdots<T_m<\cdots \le T_M$, the inverse problem is to find $\boldsymbol{\alpha}^\star\in\mathcal{A}$ such that
\begin{equation}
\label{eq:inverse_problem_weak}
\boxed{
\mathcal{J}_{K,M}(\boldsymbol{\alpha}^\star)=0,
\qquad M\ge 1
}
\end{equation}
with $\mathcal{J}_{K,M}(\boldsymbol{\alpha}^\star)$ the weak multi-time loss defined according to \eqref{eq:weak-loss} determined by the $\ell_2$ norm of the difference between the weak loss \eqref{eq:weak-loss} and the weak forward model moment  \eqref{eq:forward-moment}.
\end{definition}
This replaces the requirement that the predicted and observed marginals agree at every point $\boldsymbol{X}$, the inverse problem~\eqref{eq:inverse_problem}, by a finite-dimensional misfit on $K$ summary statistics for $M$ time instances. 

In general, the inverse problem~\eqref{eq:inverse_problem} is ill-posed, since marginalization over the latent coordinates discards information from the joint density. Hyperbolicity together with the characteristic representation nonetheless restores local identifiability of $\boldsymbol{\alpha}^\star$: the flow along forward and backward characteristics yields an unbiased, computable sensitivity of the observed marginal to $\boldsymbol{\alpha}$, and $\boldsymbol{\alpha}^\star$ is locally identifiable precisely if the resulting sensitivity Gramian is positive definite, as made precise in the identifiability analysis below.
The sensitivity derived in the next section is instrumental in a stable iterative convergence to $\boldsymbol{\alpha}^\star$.

\subsection{Parameter sensitivity of the characteristic solution}
\label{subsec:backward_scan}


In the Liouville equation, the density is described in an Eulerian form: at each
time \(T\), \(\fXYcondA(\boldsymbol Q,T\mid\boldsymbol\alpha)\) is a function on
the state space, evaluated at a sample-space coordinate \(\boldsymbol Q\). To
study the parameter sensitivity of this Eulerian density, we therefore fix a
terminal state-space point \(\boldsymbol Q_T\) and differentiate the value of the
density at that point with respect to \(\boldsymbol\alpha\). To emphasize this
Eulerian convention, we write the derivative as \(\dpEul{\,\cdot\,}\). This is distinct from a Lagrangian sensitivity, where a sample is labeled by its initial location $\boldsymbol{q}_0$, and the derivative is taken along a characteristic
trajectory initialized at the fixed \(\boldsymbol q_0\), as illustrated in figure \ref{fig:sensitivity_schematic}.

\begin{definition}[Eulerian parameter sensitivity]
 \label{def:G}
For a terminal state-space point \(\boldsymbol Q_T\in\mathbb{R}^d\),
a final time \(T>0\), and a parameter \(\boldsymbol\alpha\in\mathcal A\), define 
\begin{equation}
\label{eq:G-def}
G(\boldsymbol{Q}_T,T;\boldsymbol{\alpha})
:=
\dpEul{\log \fXYcondA(\boldsymbol{Q}_T,T\mid\boldsymbol{\alpha})}
\in\mathbb{R}^p
\end{equation}
denotes the Eulerian sensitivity to $\alpha$ of the log-density of the forward solution at the Eulerian point $\boldsymbol{Q}_T$.
Under Assumption~\ref{ass:char_regularity}, the right-hand side of~\eqref{eq:char-solution-terminal} is $C^1$ in $\boldsymbol{\alpha}$, so $G$ is well-defined.
\end{definition}
The direct differentiation of the derivative in $\dpEul{\log \fXYcondA(\boldsymbol{Q}_T,T\mid\boldsymbol{\alpha})} $ based on the characteristic solution in ~\eqref{eq:char-solution-terminal} requires the derivatives of $\Phi_T$ and of $\Phi_T^{-1}$ with respect to $\boldsymbol{\alpha}$. These flow-map Jacobians are available in principle but costly to form directly, since they entail differentiating the entire flow map. The solution along the backward-time characteristic offers an alternative that determines the sensitivity without direct differentiation of the flow map. 

Along this characteristic, we introduce the density factor \(
\rho(t)
:=
\frac{
\fXYcondA(\boldsymbol Q_T,T\mid\boldsymbol{\alpha})
}{\fXYcondA(\boldsymbol q(t),t\mid\boldsymbol{\alpha})
}.
\)
Thus \(\rho(T)=1\), or equivalently \(\log\rho(T)=0\).
To find a backward-time solution, we start by considering  the terminal value problem,
\begin{equation}
\label{eq:backward-scan}
\frac{d\boldsymbol{q}}{dt}=\boldsymbol{R}(\boldsymbol{q},t;\boldsymbol{\alpha}),
\qquad
\frac{d\log\rho}{dt}=h_{\boldsymbol{\alpha}}(\boldsymbol{q},t),
\qquad
\boldsymbol{q}(T)=\boldsymbol{Q}_T,\quad\log\rho(T)=0,
\end{equation}
integrated from $t=T$ down to $t=0$.
By uniqueness of $\Phi_T$, $\boldsymbol{q}(0)=\Phi_T^{-1}(\boldsymbol{Q}_T;\boldsymbol{\alpha})$, and direct integration gives $\log\rho(0)=-\int_0^T h_{\boldsymbol{\alpha}}(\boldsymbol{q}(s),s)\,ds$; substituting into Lemma~\ref{lem:char_representation} yields
\begin{equation}
\label{eq:log-f-from-backward}
\log f(\boldsymbol{Q}_T,T;\boldsymbol{\alpha})=\log f_0\bigl(\boldsymbol{q}(0)\bigr)+\log\rho(0).
\end{equation}
Although the sensitivity is defined at a terminal Eulerian point
\(\boldsymbol Q_T\), the points at which it is evaluated are generated
Lagrangianly, given an initial sample \(\boldsymbol q_0\sim f_0\).
Hence \(\boldsymbol q_0\) labels the tracer, while \(\boldsymbol Q_T\) denotes
the state-space location at which that tracer arrives at time \(T\).
The unique characteristic
of~\eqref{eq:backward-scan} associated with $(\boldsymbol{Q}_T,\boldsymbol{\alpha})$.
is  denoted as $\boldsymbol{q}(\cdot)=\boldsymbol{q}(\cdot;\boldsymbol{Q}_T,\boldsymbol{\alpha})$.

\begin{figure}[h!]
    \centering
    \includegraphics[width=\linewidth]{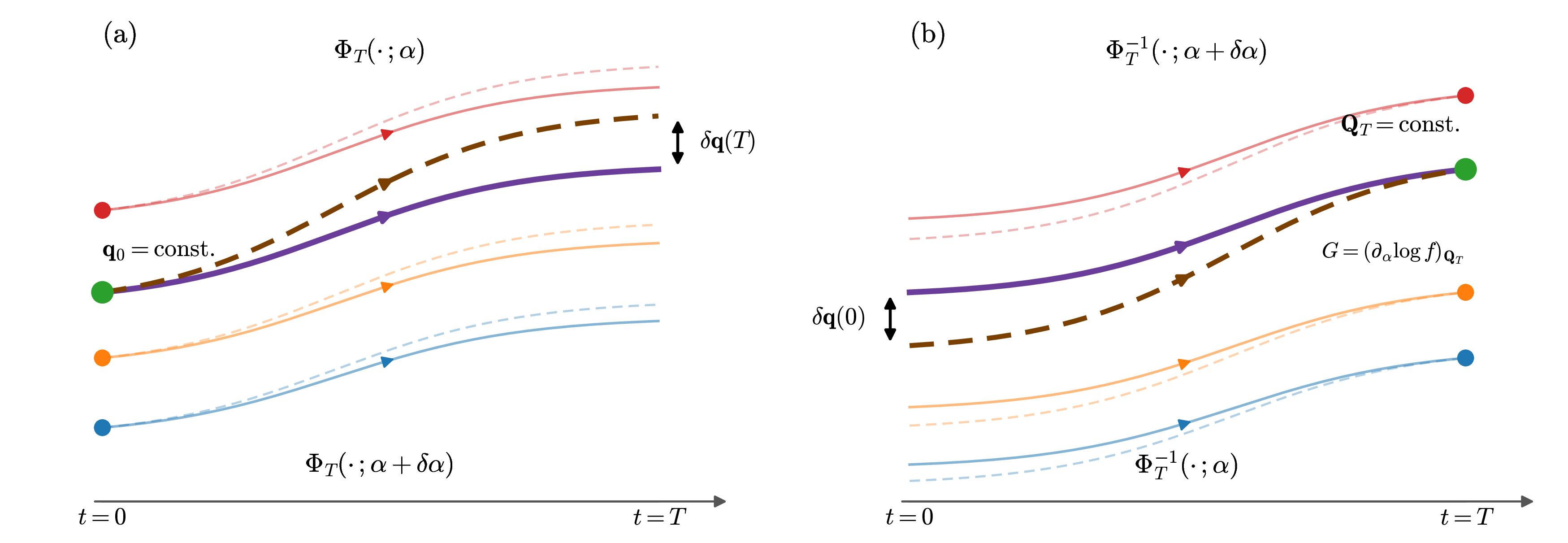}
    \caption{The two parameter-sensitivity anchorings, shown on a bundle of
    characteristics with one tracer highlighted, as $\boldsymbol{\alpha}$ varies from
    $\boldsymbol{\alpha}^\star$ to $\boldsymbol{\alpha}^\star+\delta\boldsymbol{\alpha}$;
    solid curves are the flow $\Phi_T(\cdot\,;\boldsymbol{\alpha})$ and dashed curves the flow
    at $\boldsymbol{\alpha}+\delta\boldsymbol{\alpha}$.
    $(a)$~the initial point $\boldsymbol{q}_0$ is held fixed and shared by both flows, and the
    terminal points differ by $\delta\boldsymbol{q}(T)$.
    $(b)$~the terminal Eulerian point $\boldsymbol{Q}_T$ is held fixed and shared by both
    flows, the initial points differ by $\delta\boldsymbol{q}(0)$, and the density sensitivity
    $G$ is read at $\boldsymbol{Q}_T$.
    Filled dots mark the held-fixed point of the highlighted tracer.}
    \label{fig:sensitivity_schematic}
\end{figure}  

The following theorem shows that the Eulerian sensitivity depends uniquely on the sensitivity of the accumulated divergence with respect to $\boldsymbol{\alpha}$ along the backward-time characteristic, which prevents the need for direct differentiation of the flow map.

\begin{theorem}[Eulerian sensitivity along backward characteristics]
\label{thm:G_formula}
The Eulerian sensitivity decomposes as 
\begin{equation}
G(\boldsymbol{Q}_T,T;\boldsymbol{\alpha})=S_{\boldsymbol{\alpha}}(0)^{\!\top}\nabla\log f_0(\boldsymbol{q}(0))+r(T),
\label{eqn:G-decomposition}
\end{equation}
where $r(T)$
is the sensitivity of the accumulated divergence with respect to $\boldsymbol{\alpha}$,
\begin{equation}
r(T):=\dpEul{\log\rho(0)}=-\int_0^T[\partial_{\boldsymbol{\alpha}}h_{\boldsymbol{\alpha}}+(\nabla_{\boldsymbol{q}}h_{\boldsymbol{\alpha}})^{\!\top}S_{\boldsymbol{\alpha}}(t)]\,dt\in\mathbb{R}^p,
\label{eqn:r-formula}
\end{equation}
with  $S_{\boldsymbol{\alpha}}(t):=\dpEul{\boldsymbol{q}(t;\boldsymbol{Q}_T,\boldsymbol{\alpha})}\in\mathbb{R}^{d\times p}$
the unique solution of the equation
\begin{equation}
\label{eq:backward-sensitivity}
\dot S_{\boldsymbol{\alpha}}(t)
=
\nabla_{\boldsymbol{q}}\boldsymbol{R}\,S_{\boldsymbol{\alpha}}(t)
+\partial_{\boldsymbol{\alpha}}\boldsymbol{R},
\qquad
S_{\boldsymbol{\alpha}}(T)=\boldsymbol{0},
\end{equation}
integrated from $t=T$ to $t=0$, with unindicated arguments of $(\boldsymbol{q}(t),t)$.
\end{theorem}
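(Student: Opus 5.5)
The plan is to differentiate the backward-characteristic representation \eqref{eq:log-f-from-backward} with respect to $\boldsymbol{\alpha}$ while holding the terminal point $\boldsymbol{Q}_T$ fixed. Two terms appear. Differentiating the first term by the chain rule gives
\[
\dpEul{\log f_0(\boldsymbol{q}(0))}=S_{\boldsymbol{\alpha}}(0)^{\!\top}\nabla\log f_0(\boldsymbol{q}(0)).
\]
The second term is $r(T):=\dpEul{\log\rho(0)}$ by definition. This yields the decomposition \eqref{eqn:G-decomposition} directly, so the work lies in justifying the differentiability of $\boldsymbol{q}(t;\boldsymbol{Q}_T,\boldsymbol{\alpha})$ in $\boldsymbol{\alpha}$, identifying $S_{\boldsymbol{\alpha}}$ through \eqref{eq:backward-sensitivity}, and computing $r(T)$ as in \eqref{eqn:r-formula}.

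\textbf{Step 1 (sensitivity ODE).} The backward characteristic $\boldsymbol{q}(\cdot;\boldsymbol{Q}_T,\boldsymbol{\alpha})$ solves a terminal value problem for the $C^1$, globally Lipschitz field $\boldsymbol{R}$ of Assumption~\ref{ass:char_regularity}. Equivalently, $\boldsymbol{q}(t)=\Phi_t(\Phi_T^{-1}(\boldsymbol{Q}_T;\boldsymbol{\alpha});\boldsymbol{\alpha})$. By the standard theorem on differentiable dependence on parameters and data (applied with time reversed $s=T-t$), this map is $C^1$ in $\boldsymbol{\alpha}$. Its derivative $S_{\boldsymbol{\alpha}}(t)$ satisfies the linearized (variational) equation obtained by differentiating $\dot{\boldsymbol{q}}=\boldsymbol{R}(\boldsymbol{q},t;\boldsymbol{\alpha})$, namely $\dot S_{\boldsymbol{\alpha}}=\nabla_{\boldsymbol{q}}\boldsymbol{R}\,S_{\boldsymbol{\alpha}}+\partial_{\boldsymbol{\alpha}}\boldsymbol{R}$. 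The terminal condition $S_{\boldsymbol{\alpha}}(T)=\boldsymbol{0}$ holds because $\boldsymbol{q}(T)=\boldsymbol{Q}_T$ is independent of $\boldsymbol{\alpha}$. The equation is linear with coefficients continuous in $t$ and bounded by $L_R$, so Gr\"onwall gives existence and uniqueness on $[0,T]$ and the bound $\|S_{\boldsymbol{\alpha}}(t)\|\le C e^{L_R(T-t)}$. The exchange $\partial_{\boldsymbol{\alpha}}\,d/dt=d/dt\,\partial_{\boldsymbol{\alpha}}$ is part of that classical theorem.

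\textbf{Step 2 (divergence term).} Integrating the $\log\rho$ equation in \eqref{eq:backward-scan} gives $\log\rho(0)=-\int_0^T h_{\boldsymbol{\alpha}}(\boldsymbol{q}(t;\boldsymbol{Q}_T,\boldsymbol{\alpha}),t)\,dt$. I would then differentiate under the integral sign. The chain rule gives the integrand $\partial_{\boldsymbol{\alpha}}h_{\boldsymbol{\alpha}}+(\nabla_{\boldsymbol{q}}h_{\boldsymbol{\alpha}})^{\!\top}S_{\boldsymbol{\alpha}}(t)$, which is \eqref{eqn:r-formula}.

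\textbf{Main obstacle.} The delicate point is justifying differentiation under the integral sign, since $\partial_{\boldsymbol{\alpha}}h$ and $\nabla_{\boldsymbol{q}}h$ are assumed bounded only on bounded sets of states. I would handle this by restricting $\boldsymbol{\alpha}$ to a compact neighbourhood in the open set $\mathcal{A}$. The Lipschitz bound and Gr\"onwall then confine the backward trajectories $\boldsymbol{q}(t;\boldsymbol{Q}_T,\boldsymbol{\alpha})$, for $t\in[0,T]$, to a bounded ball. On that ball the integrand is continuous and dominated by a constant times $(1+e^{L_R T})$, so dominated convergence (or the Leibniz rule for continuous integrands on compact sets) applies.

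A minor additional check is that $\boldsymbol{q}(0)$ lies in $\Omega^{\circ}$, where $\log f_0$ is $C^1$. This holds whenever $\boldsymbol{Q}_T$ is generated from a sample $\boldsymbol{q}_0\in\Omega^{\circ}$, which is the setting of Definition~\ref{def:G}. Combining Steps 1 and 2 with the chain-rule term then completes the argument.
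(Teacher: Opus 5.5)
Your proposal is correct and follows the same route as the paper's proof. You differentiate $\log f(\boldsymbol{Q}_T,T;\boldsymbol{\alpha})=\log f_0(\boldsymbol{q}(0))+\log\rho(0)$ at fixed $\boldsymbol{Q}_T$, derive the variational equation with terminal condition $S_{\boldsymbol{\alpha}}(T)=\boldsymbol{0}$, and differentiate the accumulated divergence under the integral sign; your extra justifications (confining the backward characteristics to a bounded ball uniformly for $\boldsymbol{\alpha}$ in a compact neighbourhood, as the locally bounded hypotheses on $h_{\boldsymbol{\alpha}}$ require, and checking $\boldsymbol{q}(0)\in\Omega^{\circ}$) make rigorous steps the paper only asserts.
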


\begin{proof}
The terminal condition $S_{\boldsymbol{\alpha}}(T)=\boldsymbol{0}$ is immediate from $\boldsymbol{q}(T)=\boldsymbol{Q}_T$ being held fixed by definition of $S_{\boldsymbol{\alpha}}$.
Differentiating the state equation $d\boldsymbol{q}/dt=\boldsymbol{R}(\boldsymbol{q}(t),t;\boldsymbol{\alpha})$ with respect to $\boldsymbol{\alpha}$ at fixed $\boldsymbol{Q}_T$ results in~\eqref{eq:backward-sensitivity}.

From~\eqref{eq:backward-scan}, $\log\rho(0)=-\int_0^T h_{\boldsymbol{\alpha}}(\boldsymbol{q}(t),t)\,dt$.
The integrand is $C^1$ in $(\boldsymbol{\alpha},t)$ on the compact $[0,T]$ with bounded $\boldsymbol{\alpha}$-derivative, so differentiation under the integral applies and
\[
r(T)
=
-\int_0^T
\Bigl[\partial_{\boldsymbol{\alpha}}h_{\boldsymbol{\alpha}}(\boldsymbol{q}(t),t)
+\nabla_{\boldsymbol{q}}h_{\boldsymbol{\alpha}}(\boldsymbol{q}(t),t)^{\!\top}\,\dpEul{\boldsymbol{q}(t)}\Bigr]\,dt,
\]
which is \eqref{eqn:r-formula} of Theorem~\ref{thm:G_formula} after substituting $\dpEul{\boldsymbol{q}(t)}=S_{\boldsymbol{\alpha}}(t)$.

Finally, differentiating $\log f(\boldsymbol{Q}_T,T;\boldsymbol{\alpha})=\log f_0(\boldsymbol{q}(0))+\log\rho(0)$ with respect to $\boldsymbol{\alpha}$ at fixed $\boldsymbol{Q}_T$ and applying the chain rule yields
\[
G(\boldsymbol{Q}_T,T;\boldsymbol{\alpha})
=
\nabla\log f_0(\boldsymbol{q}(0))^{\!\top}\,\partial_{\boldsymbol{\alpha}}\boldsymbol{q}(0)
+
\partial_{\boldsymbol{\alpha}}\log\rho(0)
=
S_{\boldsymbol{\alpha}}(0)^{\!\top}\,\nabla\log f_0(\boldsymbol{q}(0))+r(T),
\]
which is the decomposition~\eqref{eqn:G-decomposition} of $G$ in Theorem~\ref{thm:G_formula}.
\end{proof}

\begin{remark}[Uniform initial density]
\label{rem:uniform_f0}
 Under Assumption~\ref{ass:char_regularity}, the right-hand side of~\eqref{eq:char-solution-terminal} is $C^1$ in $\boldsymbol{\alpha}$ for terminal points whose backward characteristic starts in $\Omega^{\!\circ}$. 
For uniform $f_0$, $\nabla\log f_0\equiv\boldsymbol{0}$ in $\Omega^{\!\circ}$,so
the first term in the decomposed expression for $G$ vanishes, and thus $G(\boldsymbol{Q}_T,T;\boldsymbol{\alpha})=r(T)$.
The boundary $\partial\Omega$ has $f_0$-measure zero, so its contributions vanish in a weak sense.
\end{remark}


\begin{remark}[Implementation using automatic-differentiation]
\label{rem:ad_implementation}
In implementations, the system to compute sensitivity in Theorem~\ref{thm:G_formula} is not formed explicitly. Instead, the system~\eqref{eq:backward-scan} is integrated backward in time by a differentiable time-stepping scheme, resulting in $\log f(\boldsymbol{Q}_T,T;\boldsymbol{\alpha})$ as a $C^1$ function of $\boldsymbol{\alpha}$ with $\boldsymbol{Q}_T$ treated as constant.
Forward-mode automatic differentiation then yields $G\in\mathbb{R}^p$ in a single backward pass at cost proportional to $p$ times one forward solve; equivalently, the auto-diff trace propagates $S_{\boldsymbol{\alpha}}$ and $r$ through the same time-stepping scheme, without ever forming the $d\times p$ sensitivity matrix.
\end{remark}

\begin{remark}[Relation to adjoint methods for trajectory losses]
\label{rem:adjoint_vs_density}
The backward system~\eqref{eq:backward-scan} resembles adjoint methods for trajectory-dependent losses~\cite{chen2018neural,dominguez2022inference}, in that both integrate an auxiliary variable backward along a characteristic.
The two differ in objective and in the structural role of the divergence.
Standard adjoint methods compute $\partial_{\boldsymbol{\alpha}}\mathcal{L}(\boldsymbol{q}_T)$ for a scalar loss $\mathcal{L}$ that depends only on the terminal state, and the divergence $h_{\boldsymbol{\alpha}}$ does not appear.
Theorem~\ref{thm:G_formula} computes instead $\partial_{\boldsymbol{\alpha}}\log f(\boldsymbol{Q}_T,T;\boldsymbol{\alpha})$ at a fixed terminal point; this requires both the propagation of $\nabla\log f_0$ along the inverse flow and the parametric sensitivity of the accumulated divergence.
The latter term encodes local volume change of the flow and has no analog in trajectory-based adjoint formulations; it is precisely the term that distinguishes Liouville log-density sensitivities from standard ODE parameter sensitivities.
\end{remark}

\subsection{Stochastic gradient}
\label{subsec:weak_loss}

In order to iterate to a converged parameter, an unbiased stochastic gradient of the weak loss function is constructed by combining the Eulerian sensitivity 
in (\ref{eqn:G-decomposition})
with a crossed U-statistic that correlates the weak loss function for a set of multiple characteristics.

\begin{lemma}[Gradient of weak-loss function] 
\label{lem:score_identity}
Under Assumption~\ref{ass:char_regularity}, for a single measurement time $T$, the boundedness of the test functions, and the local integrability condition,
\[
\mathbb{E}_{\boldsymbol{q}_0\sim f_0}\|G(\Phi_T(\boldsymbol{q}_0;\boldsymbol{\alpha}),T;\boldsymbol{\alpha})\|<\infty,
\]
the gradient of the weak loss is expressed as
\begin{equation}
\label{eq:grad-weak-loss}
\partial_{\boldsymbol{\alpha}}\mathcal{J}_K(\boldsymbol{\alpha})
=
\sum_{k=1}^K\bigl(\mu_k(\boldsymbol{\alpha})-\tilde{\mu}_k\bigr)\,\partial_{\boldsymbol{\alpha}}\mu_k(\boldsymbol{\alpha}),
\end{equation}
where 
the feature gradient satisfies,
\begin{equation}
\label{eq:feature-sensitivity}
\partial_{\boldsymbol{\alpha}}\mu_k(\boldsymbol{\alpha})
=
\mathbb{E}_{\boldsymbol{q}_0\sim f_0}\!\Bigl[\varphi_k\bigl(\Pi_{\boldsymbol{X}}\boldsymbol{q}_T\bigr)\,G(\boldsymbol{q}_T,T;\boldsymbol{\alpha})\Bigr],
\qquad
\boldsymbol{q}_T:=\Phi_T(\boldsymbol{q}_0;\boldsymbol{\alpha}),
\end{equation}
and the gradient of the weak loss is therefore
a sum of products of expectations.
\end{lemma}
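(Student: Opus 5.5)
The plan has two steps. First, the outer formula \eqref{eq:grad-weak-loss} is just the chain rule applied to the finite quadratic sum \eqref{eq:weak-loss}. The data moments $\tilde\mu_k$ do not depend on $\boldsymbol{\alpha}$, so once each $\mu_k$ is known to be differentiable the sum differentiates term by term. The substance is the feature-gradient identity \eqref{eq:feature-sensitivity}, which I would obtain by writing $\mu_k$ in Eulerian form on the joint space and moving the derivative onto the density. I would deliberately avoid the Lagrangian form $\mathbb{E}[\varphi_k(\Pi_{\boldsymbol{X}}\Phi_T(\boldsymbol{q}_0;\boldsymbol{\alpha}))]$, because $\varphi_k$ is only $C_b$ and that route would require differentiating $\varphi_k$.

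Concretely, I would start from the fourth line of \eqref{eq:forward-moment},
\[
\mu_k(\boldsymbol{\alpha})=\int_{\mathbb{R}^d}\varphi_k(\Pi_{\boldsymbol{X}}\boldsymbol{Q})\,\fXYcondA(\boldsymbol{Q},T\mid\boldsymbol{\alpha})\,d\boldsymbol{Q},
\]
in which $\boldsymbol{Q}$ is a dummy variable held fixed as $\boldsymbol{\alpha}$ varies. This is exactly the Eulerian convention of Definition~\ref{def:G}. On the set where the density is positive, Definition~\ref{def:G} and Theorem~\ref{thm:G_formula} give $\partial_{\boldsymbol{\alpha}}\fXYcondA=\fXYcondA\,G$ (the log-derivative identity). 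Differentiating under the integral sign would therefore give $\partial_{\boldsymbol{\alpha}}\mu_k=\int\varphi_k(\Pi_{\boldsymbol{X}}\boldsymbol{Q})\,G(\boldsymbol{Q},T;\boldsymbol{\alpha})\,\fXYcondA(\boldsymbol{Q},T\mid\boldsymbol{\alpha})\,d\boldsymbol{Q}$. Since $\fXYcondA(\cdot,T\mid\boldsymbol{\alpha})=(\Phi_T)_\#f_0$, the pushforward change of variables $\boldsymbol{Q}=\Phi_T(\boldsymbol{q}_0;\boldsymbol{\alpha})$, as in the last lines of \eqref{eq:forward-moment}, turns this into $\mathbb{E}_{\boldsymbol{q}_0\sim f_0}[\varphi_k(\Pi_{\boldsymbol{X}}\boldsymbol{q}_T)\,G(\boldsymbol{q}_T,T;\boldsymbol{\alpha})]$, which is \eqref{eq:feature-sensitivity}. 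Off the support the density is zero in a neighbourhood in $\boldsymbol{\alpha}$, up to the image of $\partial\Omega$. That image has measure zero because $\Phi_T$ is a diffeomorphism and $\partial\Omega$ has $f_0$-measure zero, the same remark as Remark~\ref{rem:uniform_f0}.

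The main obstacle is justifying the interchange of $\partial_{\boldsymbol{\alpha}}$ and $\int d\boldsymbol{Q}$. My first attempt would be the mean-value/dominated-convergence form of Leibniz's rule. For $\boldsymbol{\alpha}'$ near $\boldsymbol{\alpha}$, write the difference quotient of $\fXYcondA(\boldsymbol{Q},T\mid\cdot)$ as an average of $\fXYcondA\,G$ along the segment from $\boldsymbol{\alpha}$ to $\boldsymbol{\alpha}'$. Then use $|\varphi_k|\le\|\varphi_k\|_\infty$ together with the stated integrability $\int\|G\|\fXYcondA\,d\boldsymbol{Q}=\mathbb{E}\|G(\Phi_T(\boldsymbol{q}_0),T;\boldsymbol{\alpha})\|<\infty$. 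Strictly speaking this needs that integrability to hold locally uniformly in $\boldsymbol{\alpha}$. I would read the lemma's ``local integrability condition'' as supplying exactly that, since it is the natural reading and the hypothesis is phrased as local.

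As a fallback I would argue in the weak sense. We assumed $\partial_{\boldsymbol{\alpha}}f_{\boldsymbol{x}\mid\boldsymbol{\alpha}}\in L^1$, and $\varphi_k$ is bounded, so the pairing $\int\varphi_k\,\partial_{\boldsymbol{\alpha}}f_{\boldsymbol{x}\mid\boldsymbol{\alpha}}$ is defined. Integrating $G$ against the joint density over $\boldsymbol{Y}$ identifies it with $\int\varphi_k\,\partial_{\boldsymbol{\alpha}}f_{\boldsymbol{x}\mid\boldsymbol{\alpha}}$ by Fubini. Once \eqref{eq:feature-sensitivity} is established, \eqref{eq:grad-weak-loss} follows by the chain rule. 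The final observation is that each term is a product of the scalar expectation $\mu_k-\tilde\mu_k$ with the vector expectation $\partial_{\boldsymbol{\alpha}}\mu_k$, both taken over $f_0$. This is the ``sum of products of expectations'' structure that motivates the crossed U-statistic.
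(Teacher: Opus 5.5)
Your route is the paper's own: the Eulerian form of $\mu_k$ on the joint space, $\partial_{\boldsymbol{\alpha}}f=f\,G$, differentiation under $\int d\boldsymbol{Q}$, pull-back through $\Phi_T$, and the chain rule. You are also right that $\mathbb{E}\|G\|<\infty$ at the single parameter $\boldsymbol{\alpha}$ does not by itself supply a dominating function, a point the paper's proof passes over. The genuine gap is the sentence disposing of $\partial\Omega$, which is also the claim of Remark~\ref{rem:uniform_f0}. That $\Phi_T(\partial\Omega;\boldsymbol{\alpha})$ is null at the fixed $\boldsymbol{\alpha}$ only gives $\partial_{\boldsymbol{\alpha}}f=f\,G$ almost everywhere. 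Your mean-value step needs more: $\boldsymbol{\alpha}''\mapsto f(\boldsymbol{Q},T\mid\boldsymbol{\alpha}'')$ must be $C^1$ on the whole segment $[\boldsymbol{\alpha},\boldsymbol{\alpha}']$. This fails for every $\boldsymbol{Q}$ in the region swept by the moving boundary $\Phi_T(\partial\Omega;\boldsymbol{\alpha}'')$. When $f_0$ does not vanish on $\partial\Omega$, that region has measure of order $\|\boldsymbol{\alpha}'-\boldsymbol{\alpha}\|$, while the difference quotient on it is of order $f/\|\boldsymbol{\alpha}'-\boldsymbol{\alpha}\|$. In the limit it therefore contributes the Reynolds boundary flux $\oint_{\Phi_T(\partial\Omega;\boldsymbol{\alpha})}\varphi_k(\Pi_{\boldsymbol{X}}\boldsymbol{Q})\,f\,(\boldsymbol{V}\cdot\boldsymbol{n})\,dS$, with $\boldsymbol{V}$ the velocity of the boundary in the direction of $\boldsymbol{\alpha}'-\boldsymbol{\alpha}$. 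No integrability condition on $G$, uniform in $\boldsymbol{\alpha}$ or not, can control this term. Your fallback through the marginal does not help either, because identifying $\int f\,G\,d\boldsymbol{Y}$ with $\partial_{\boldsymbol{\alpha}}f_{\boldsymbol{x}\mid\boldsymbol{\alpha}}$ is the same interchange, now over $\boldsymbol{Y}$.

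Here is a concrete counterexample. Take $d=2$, $\boldsymbol{q}=(x,y)$ with $y$ latent, $\boldsymbol{R}(\boldsymbol{q};\alpha)=(\alpha y,0)$, $\mathcal{A}=(0.1,0.5)$, $T=1$, $f_0$ uniform on $[-1,1]^2$, and $\varphi(X)=\min(X^2,4)$, which equals $X^2$ on the reachable range $|X|\le 1.5$. Every hypothesis holds. Since $h_\alpha\equiv0$ and $\nabla\log f_0\equiv\boldsymbol{0}$ on $\Omega^{\circ}$, we have $G\equiv0$, so the right-hand side of \eqref{eq:feature-sensitivity} is zero. The $x$-marginal is a trapezoid that is Lipschitz in $\alpha$, so even the standing weak-differentiability assumption on $f_{\boldsymbol{x}\mid\boldsymbol{\alpha}}$ is met. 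Yet $\mu(\alpha)=\mathbb{E}[(x_0+\alpha y_0)^2]=(1+\alpha^2)/3$, whose derivative $2\alpha/3\neq0$ is exactly the boundary flux above, computed over the two sheared edges.

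So the lemma as stated, and with it both your argument and the paper's, needs an extra hypothesis excluding a jump of $f_0$ across $\partial\Omega$. One sufficient choice is $\Omega=\mathbb{R}^d$, which covers the Gaussian initial laws of all four experiments. Then $f(\boldsymbol{Q},T\mid\cdot)$ is $C^1$ for every $\boldsymbol{Q}$, and your mean-value argument is complete given a local integrable majorant of $f\,\|G\|$, which is your locally uniform reading of the integrability condition. Without such a hypothesis, the boundary flux must be added to \eqref{eq:feature-sensitivity}.
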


Here \(\boldsymbol q_T\) denotes the terminal position of the Lagrangian tracer initialized at \(\boldsymbol q_0\). Using it as the Eulerian argument of \(G\) means the state-space point reached by the
tracer, namely \(\boldsymbol Q_T=\boldsymbol q_T\).

\begin{proof}
Push the expectation in $\mu_k$ from $\boldsymbol{q}_0$ to the terminal point via the flow $\Phi_T(\cdot;\boldsymbol{\alpha})$, with $f(\boldsymbol{Q}_T,T;\boldsymbol{\alpha})$ as the forward-time moment:
\[
\mu_k(\boldsymbol{\alpha})
=
\int_{\mathbb{R}^d}\varphi_k\bigl(\Pi_{\boldsymbol{X}}\boldsymbol{Q}_T\bigr)\,f(\boldsymbol{Q}_T,T;\boldsymbol{\alpha})\,d\boldsymbol{Q}_T.
\]
The local integrability of $G$ provides an integrable dominating function for the differentiated probability density function, since $\partial_{\boldsymbol{\alpha}}f=fG$ at a fixed final time, $T$,  yielding
\[
\begin{aligned}
\partial_{\boldsymbol{\alpha}}\mu_k(\boldsymbol{\alpha})
&=
\int_{\mathbb{R}^d}\varphi_k\bigl(\Pi_{\boldsymbol{X}}\boldsymbol{Q}_T\bigr)\,\partial_{\boldsymbol{\alpha}}f(\boldsymbol{Q}_T,T;\boldsymbol{\alpha})\,d\boldsymbol{Q}_T\\
&=
\int_{\mathbb{R}^d}\varphi_k\bigl(\Pi_{\boldsymbol{X}}\boldsymbol{Q}_T\bigr)\,f(\boldsymbol{Q}_T,T;\boldsymbol{\alpha})\,G(\boldsymbol{Q}_T,T;\boldsymbol{\alpha})\,d\boldsymbol{Q}_T,
\end{aligned}
\]
where the second equality uses $\partial_{\boldsymbol{\alpha}}f=f\,G$ from Definition~\ref{def:G}.
Reverting the expectation to $\boldsymbol{q}_0\sim f_0$ with $f(\boldsymbol{Q}_T,T;\boldsymbol{\alpha})\,d\boldsymbol{Q}_T=f_0(\boldsymbol{Q}_0)\,d\boldsymbol{Q}_0$ gives~\eqref{eq:feature-sensitivity}; the chain rule applied to $\mathcal{J}_K$ then yields~\eqref{eq:grad-weak-loss}.
\end{proof}

The right-hand side of~\eqref{eq:grad-weak-loss} is a sum of products of two expectations: the data misfit $\mu_k-\tilde{\mu}_k$ and the gradient $\partial_{\boldsymbol{\alpha}}\mu_k$.
A naive Monte Carlo estimator that reuses one 
solution manifold for both factors is biased and of order $1/N$, where $N$ is the number Monte-Carlo sample manifolds \cite{lee2019unbiased}.
To prevent this bias, we use the crossed U-statistic \cite{hoeffding1948ustatistic}, in which the weak loss and forward model moment are evaluated on disjoint sample manifolds.

\begin{proposition}[Unbiased crossed estimator]
\label{prop:crossed_ustatistic}
Let $\{\boldsymbol{q}_0^{(n)}\}_{n=1}^N\sim f_0$ be i.i.d.\ with $N\geq 2$, set $\boldsymbol{q}_T^{(n)}:=\Phi_T(\boldsymbol{q}_0^{(n)};\boldsymbol{\alpha})$, $\mu_k^{(n)}:=\varphi_k(\Pi_{\boldsymbol{X}}\boldsymbol{q}_T^{(n)})$, and $G^{(n)}:=G(\boldsymbol{q}_T^{(n)},T;\boldsymbol{\alpha})$.
Under Assumption~\ref{ass:char_regularity} and the boundedness of the test functions, the crossed estimator
\begin{equation}
\label{eq:crossed-estimator}
\widehat{\boldsymbol{g}}=
\widehat{\partial_{\boldsymbol{\alpha}}\mathcal{J}_K}
:=
\sum_{k=1}^K\frac{1}{N(N-1)}\sum_{1\leq i\neq j\leq N}\bigl(\mu_k^{(i)}-\tilde{\mu}_k\bigr)\,\mu_k^{(j)}\,G^{(j)}
\end{equation}
is an unbiased estimator of the gradient~\eqref{eq:grad-weak-loss}: $\mathbb{E}\bigl[\widehat{\partial_{\boldsymbol{\alpha}}\mathcal{J}_K}\bigr]=\partial_{\boldsymbol{\alpha}}\mathcal{J}_K(\boldsymbol{\alpha})$.
\end{proposition}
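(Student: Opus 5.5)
Since $\widehat{\boldsymbol g}$ is a finite sum of terms indexed by $k$ and by ordered pairs $(i,j)$ with $i\neq j$, linearity of expectation reduces the claim to computing the expectation of one summand
\[
\bigl(\mu_k^{(i)}-\tilde{\mu}_k\bigr)\,\mu_k^{(j)}\,G^{(j)},\qquad i\neq j .
\]
The key structural fact is independence. The samples $\boldsymbol{q}_0^{(i)}$ and $\boldsymbol{q}_0^{(j)}$ are independent. The map $\boldsymbol{q}_0\mapsto\Phi_T(\boldsymbol{q}_0;\boldsymbol{\alpha})$ is deterministic and measurable (indeed $C^1$). The sensitivity $G(\cdot,T;\boldsymbol{\alpha})$ is a deterministic function of the terminal point, computed through the backward system~\eqref{eq:backward-scan} and Theorem~\ref{thm:G_formula}. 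Hence $\mu_k^{(i)}$ is a measurable function of $\boldsymbol{q}_0^{(i)}$ alone, and $(\mu_k^{(j)},G^{(j)})$ is a measurable function of $\boldsymbol{q}_0^{(j)}$ alone. These two random quantities are therefore independent.

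Next I check integrability, so that the expectation of the product factorizes as the product of expectations. The factor $\mu_k^{(i)}-\tilde{\mu}_k$ is bounded, because $\varphi_k\in C_b$ and $|\tilde{\mu}_k|\le\|\varphi_k\|_\infty$ since $\tilde f_{\boldsymbol x}$ is a probability density. The factor $\mu_k^{(j)}G^{(j)}$ satisfies $\|\mu_k^{(j)}G^{(j)}\|\le\|\varphi_k\|_\infty\|G^{(j)}\|$, which is integrable by the local integrability condition $\mathbb{E}\|G(\Phi_T(\boldsymbol{q}_0;\boldsymbol{\alpha}),T;\boldsymbol{\alpha})\|<\infty$ carried over from Lemma~\ref{lem:score_identity}. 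I would state explicitly that this hypothesis is inherited, because Assumption~\ref{ass:char_regularity} together with bounded test functions does not obviously give it by itself; this is the one point needing care. Fubini then gives
\[
\mathbb{E}\bigl[(\mu_k^{(i)}-\tilde{\mu}_k)\mu_k^{(j)}G^{(j)}\bigr]
=\bigl(\mathbb{E}[\mu_k^{(i)}]-\tilde{\mu}_k\bigr)\,\mathbb{E}\bigl[\mu_k^{(j)}G^{(j)}\bigr].
\]

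Because the samples are identically distributed, \eqref{eq:forward-moment} gives $\mathbb{E}[\mu_k^{(i)}]=\mu_k(\boldsymbol{\alpha})$. The second expectation is exactly the feature gradient $\partial_{\boldsymbol{\alpha}}\mu_k(\boldsymbol{\alpha})$ in~\eqref{eq:feature-sensitivity} of Lemma~\ref{lem:score_identity}. Each summand therefore has the same expectation $(\mu_k-\tilde{\mu}_k)\partial_{\boldsymbol{\alpha}}\mu_k$. There are $N(N-1)$ ordered pairs with $i\neq j$, and $N\ge2$ ensures this count is nonzero, so the normalization $1/(N(N-1))$ cancels it exactly. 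Summing over $k$ recovers~\eqref{eq:grad-weak-loss}.

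For contrast, I would remark that if the diagonal terms $i=j$ were included, the product would no longer factor. The resulting extra term is $\tfrac1N\mathrm{Cov}\bigl(\mu_k^{(j)},\mu_k^{(j)}G^{(j)}\bigr)$, which is the $O(1/N)$ bias the crossed estimator avoids.
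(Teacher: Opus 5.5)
Your proof is correct and follows the paper's argument. Independence of distinct particles factorizes each off-diagonal summand into $(\mu_k(\boldsymbol{\alpha})-\tilde{\mu}_k)\,\partial_{\boldsymbol{\alpha}}\mu_k(\boldsymbol{\alpha})$ via Lemma~\ref{lem:score_identity}, the $N(N-1)$ ordered pairs cancel the normalization, and your note that $\mathbb{E}\|G\|<\infty$ must be inherited from that lemma, not from Assumption~\ref{ass:char_regularity} and bounded test functions alone, is a correct refinement the paper leaves implicit.
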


\begin{proof}
For $i\neq j$ the particles $\boldsymbol{q}_0^{(i)}$ and $\boldsymbol{q}_0^{(j)}$ are independent, so for each $k$
\[
\mathbb{E}\bigl[(\mu_k^{(i)}-\tilde{\mu}_k)\,\mu_k^{(j)}G^{(j)}\bigr]
=
\mathbb{E}\bigl[\mu_k^{(i)}-\tilde{\mu}_k\bigr]\,\mathbb{E}\bigl[\mu_k^{(j)}G^{(j)}\bigr]
=
\bigl(\mu_k(\boldsymbol{\alpha})-\tilde{\mu}_k\bigr)\,\partial_{\boldsymbol{\alpha}}\mu_k(\boldsymbol{\alpha}),
\]
where the last identity uses Lemma~\ref{lem:score_identity}.
Averaging over the $N(N-1)$ ordered pairs of solution manifolds and summing over $k$ test functions yields ~\eqref{eq:grad-weak-loss}.
\end{proof}

\begin{remark}[Extension to multiple observation times]
The implementation for the weak-loss function for multiple times $\mathcal{J}_{K,M}$ in (\ref{eq:weak-loss}) reuses a single forward set of $N$ solution manifolds.
Initial particles $\{\boldsymbol{q}_0^{(n)}\}_{n=1}^N\sim f_0$ are integrated once on $[0,T_M]$, recording snapshots $\boldsymbol{q}^{(n)}(T_m)$ at each observation time.
At each $T_m$ the snapshot is frozen and the backward scan~\eqref{eq:backward-scan} is run from $T_m$ back to $0$ to produce $G^{(n)}(T_m):=G(\boldsymbol{q}^{(n)}(T_m),T_m;\boldsymbol{\alpha})$.
Conditional on the frozen snapshots, the $M$ backward scans are independent and can be concatenated into a single vector-valued function differentiated by one forward-mode automatic differentiation, resulting in the $MN\times p$ Jacobian of all snapshot features in a single pass.
Inserting the extra time indices $m$ into the features and gradients of~\eqref{eq:crossed-estimator} yields
\begin{equation}
\label{eq:multi_time_estimator}
\widehat{\boldsymbol{g}}=\widehat{\partial_{\boldsymbol{\alpha}}\mathcal{J}_{K,M}}
:=
\sum_{m=1}^M\sum_{k=1}^K\frac{1}{N(N-1)}\sum_{1\leq i\neq j\leq N}\bigl(\mu_{k,m}^{(i)}-\tilde{\mu}_{k,m}\bigr)\,\mu_{k,m}^{(j)}\,G^{(j)}(T_m),
\end{equation}
which inherits unbiasedness from Proposition~\ref{prop:crossed_ustatistic}.
The snapshots at different times are generally correlated because they stem from the same initial conditions.
\end{remark}

\subsection{Stochastic gradient descent (SGD) with full-matrix preconditioning}
\label{subsec:preconditioning}

The crossed estimator $\widehat{\boldsymbol{g}}\in\mathbb{R}^p$ of~\eqref{eq:crossed-estimator}, or its multi-time analogue~\eqref{eq:multi_time_estimator}, drives the preconditioned stochastic gradient descent (SGD) iteration
\begin{equation}
\label{eq:sgd_update}
\boldsymbol{\alpha}_{\ell+1}
=
\boldsymbol{\alpha}_\ell-\eta_\ell\,H_\ell^{-1/2}\,\widehat{\boldsymbol{g}}_\ell,
\end{equation}
where $\ell$ is the iteration number, and $H_\ell\succ\boldsymbol{0}$ is a positive-definite preconditioner.
For moderate parameter dimensions ($p\lesssim 10^2$), the cross-component correlations of the gradient induced by the small set of characteristics in~\eqref{eq:crossed-estimator} are non-negligible.
Diagonal preconditioners such as Adam~\cite{kingma2014adam}, which maintain only per-component second moments $\mathbb{E}[\widehat{g}_{\ell,j}^{\,2}]$, ignore this structure and converge slowly along the anisotropic directions of the loss.
A full-matrix preconditioner is affordable in this regime.
The Shampoo update~\cite{gupta2018shampoo} maintains the $p\times p$ running second-moment matrix
\begin{equation}
\label{eq:shampoo-update}
H_\ell
=
\beta\,H_{\ell-1}+(1-\beta)\,\widehat{\boldsymbol{g}}_\ell\widehat{\boldsymbol{g}}_\ell^{\top},
\qquad
H_0=\varepsilon\,I_p,
\end{equation}
with $\beta\in(0,1)$ a forgetting factor and $\varepsilon>0$ a damping constant.

The matrix inverse square root $H_\ell^{-1/2}$ is computed by eigendecomposition at $O(p^3)$ cost per step, negligible compared with the $O(N\,T/\Delta t)$ cost of the forward solves and backward scans of one set of characteristics.
The matrix $H_\ell$ tracks the running second moment of the stochastic gradient $\mathbb{E}\bigl[\widehat{\boldsymbol{g}}_\ell\widehat{\boldsymbol{g}}_\ell^{\!\top}\bigr]$ rather than the Hessian itself. 
Thus, $H_\ell$ should be interpreted as an adaptive gradient-covariance preconditioner. It can substantially improve conditioning if the dominant gradient covariance directions align with the anisotropic directions of the loss.

The step-size sequence $\{\eta_\ell\}$ satisfies the Robbins-Monro conditions $\sum_\ell\eta_\ell=\infty$ and $\sum_\ell\eta_\ell^2<\infty$~\cite{robbins1951stochastic}. 
Together with unbiased gradients, bounded second moments, a Lipschitz gradient on the relevant sublevel set, and uniformly bounded positive-definite preconditioners, these conditions give the standard stochastic-approximation convergence result: $\liminf_{\ell\to\infty}\|\partial_{\boldsymbol{\alpha}}\mathcal{J}_K(\boldsymbol{\alpha}_\ell)\|=0$ almost surely~\cite{bottou2018optimization}.
In implementations the inverse-time schedule $\eta_\ell=\eta_0/(1+\ell/\tau)$ is used.

\subsection{Local identifiability via linearized sensitivity}
\label{subsec:local_identifiability}

The ability to identify the true parameter $\boldsymbol{\alpha}^\star \in \mathcal{A}$ from the feature moments, i.e., the identifiability, as well as the convergence rate of the stochastic gradient descent iteration~\eqref{eq:sgd_update}, are both determined by the local sensitivity of the flow map  $\Phi_t(\boldsymbol{q}_0;\boldsymbol{\alpha})\Big|_{\boldsymbol{\alpha}^\star}$  in a neighborhood of $\boldsymbol{\alpha}^\star$.
The sensitivity of the solution along its Lagrangian path in the vicinity of $\boldsymbol{\alpha}^\star$, with a fixed initial condition,
\begin{equation}
\label{eq:forward_sens_def2}
S_{\boldsymbol{q}_0}(t)
:=
\dpLag{\Phi_t(\boldsymbol{q}_0;\boldsymbol{\alpha})}\big|_{\boldsymbol{\alpha}^\star}
\in\mathbb{R}^{d\times p},
\end{equation}
can be interpreted as the forward-anchored counterpart of the backward sensitivity $S_{\boldsymbol{\alpha}}(t)$ 
with a fixed end state $\boldsymbol{Q}_T$.
It is governed by the linearized equation
\begin{equation}
\label{eq:forward_sensitivity_ode}
\dot{S}_{\boldsymbol{q}_0}(t)
=
\nabla_{\boldsymbol{q}}\boldsymbol{R}\bigl(\boldsymbol{q}^\star(t),t;\boldsymbol{\alpha}^\star\bigr)\,S_{\boldsymbol{q}_0}(t)
+
\partial_{\boldsymbol{\alpha}}\boldsymbol{R}\bigl(\boldsymbol{q}^\star(t),t;\boldsymbol{\alpha}^\star\bigr),
\qquad
S_{\boldsymbol{q}_0}(0)=\boldsymbol{0}.
\end{equation}
Let $\Psi(t,s)\in\mathbb{R}^{d\times d}$ denote the state-transition matrix of the linear time-varying system $\dot{\boldsymbol{v}}=\nabla_{\boldsymbol{q}}\boldsymbol{R}\bigl(\boldsymbol{q}^\star(t),t;\boldsymbol{\alpha}^\star\bigr)\,\boldsymbol{v}$, characterized by $\partial_t\Psi(t,s)=\nabla_{\boldsymbol{q}}\boldsymbol{R}\bigl(\boldsymbol{q}^\star(t),t;\boldsymbol{\alpha}^\star\bigr)\,\Psi(t,s)$ and $\Psi(s,s)=I_d$.
Duhamel's principle yields the closed-form expression
\begin{equation}
\label{eq:forward_sens_duhamel}
S_{\boldsymbol{q}_0}(T)
=
\int_0^T\Psi(T,s)\,\partial_{\boldsymbol{\alpha}}\boldsymbol{R}\bigl(\boldsymbol{q}^\star(s),s;\boldsymbol{\alpha}^\star\bigr)\,ds,
\end{equation}
which represents the terminal sensitivity as parametric forcing propagated forward by the linearized dynamics over $[s,T]$.

With $M$ observation times $\{T_m\}_{m=1}^M$ and $K$ test functions $\{\varphi_k\}_{k=1}^K$, the sensitivity Jacobian of the multi-time weak loss~\eqref{eq:weak-loss} is the $MK\times p$ matrix with entries
\begin{equation}
\label{eq:sens_jacobian}
\mathcal{S}_{(m,k),j}
:=
\partial_{\alpha_j}\mu_{k,m}(\boldsymbol{\alpha})\Big|_{\boldsymbol{\alpha}^\star}
=
\mathbb{E}_{\boldsymbol{q}_0\sim f_0}\!\Bigl[\nabla_{\boldsymbol{x}}\varphi_k\bigl(\Pi_{\boldsymbol{X}}\boldsymbol{q}^\star(T_m)\bigr)^{\!\top}\,\Pi_{\boldsymbol{X}}\,S_{\boldsymbol{q}_0}(T_m)\Bigr]_j.
\end{equation}
The associated feature-sensitivity Gramian is the $p\times p$ Gram matrix
\begin{equation}
\label{eq:fisher_information}
\mathcal{I}_T
:=
\mathcal{S}^{\!\top}\mathcal{S}.
\end{equation}

A sufficient condition for local identifiability of $\boldsymbol{\alpha}^\star$ from the moments $\mu_{k,m}$ is the full-column-rank condition $\mathcal{I}_T\succ\boldsymbol{0}$.
Under this condition, Taylor expansion of $\mu_{k,m}$ around $\boldsymbol{\alpha}^\star$ with $\mu_{k,m} (\boldsymbol{\alpha}^\star)=\tilde{\mu}_{k,m}$ gives the following local quadratic relation
\begin{equation}
\label{eq:param_quadratic}
\mathcal{J}_{K,M}(\boldsymbol{\alpha})
=
\tfrac{1}{2}\,(\boldsymbol{\alpha}-\boldsymbol{\alpha}^\star)^{\!\top}\,\mathcal{I}_T\,(\boldsymbol{\alpha}-\boldsymbol{\alpha}^\star)
+
o\bigl(\|\boldsymbol{\alpha}-\boldsymbol{\alpha}^\star\|^2\bigr),
\end{equation}
so $\boldsymbol{\alpha}^\star$ is an isolated minimizer of the multi-time weak loss~\eqref{eq:weak-loss}.

Note that two structural mechanisms degrade the conditioning of $\mathcal{I}_T$ and consequently the parameter-recovery rate of the SGD iteration.
Firstly, With $L_R$ the global Lipschitz constant of Assumption~\ref{ass:char_regularity}, the fundamental matrix satisfies $\|\Psi(T,s)\|\leq e^{L_R(T-s)}$, so parametric forcing at early times is amplified by at most $e^{L_R T}$.
In chaotic regimes the propagator grows like $e^{\lambda_{\max}(T-s)}$ with $\lambda_{\max}$ the leading Lyapunov exponent of the linearized flow; both the signal $\mathcal{S}$ and the gradient noise \del{of Theorem~\ref{thm:lip-variance} }share this rate, and the signal-to-noise ratio saturates, which places a practical ceiling on the usable time horizon.
Secondly, marginalization over the latent coordinates restricts the columns of $\mathcal{S}$ to the projected sensitivity $\Pi_{\boldsymbol{X}}\,S_{\boldsymbol{q}_0}(T_m)$, so any direction $\boldsymbol{v}\in\mathbb{R}^p$ for which $\Pi_{\boldsymbol{X}}\,S_{\boldsymbol{q}_0}(T_m)\,\boldsymbol{v}\approx\boldsymbol{0}$ uniformly in $(m,\boldsymbol{q}_0)$ is effectively unidentifiable from the marginal observations.
\subsection{Pseudo Algorithm}
The method, combining the backward-characteristic,  Eulerian sensitivity, the crossed U-statistic gradient, and the Shampoo step, is summarized in Algorithm~\ref{alg:crossed-sgd}.

\begin{algorithm}[h!]
\caption{Parameter inference of ODES based on characteristic Liouville equations}
\label{alg:crossed-sgd}
\begin{algorithmic}[1]
\State \textbf{Inputs:} initial parameter $\boldsymbol{\alpha}_0$; step sizes $\{\eta_\ell\}_{\ell\ge0}$ with $\eta_\ell=\eta_0/(1+\ell/\tau)$; horizon $T>0$; test functions $\{\varphi_k\}_{k=1}^K\subset C_b(\mathbb{R}^{d_x})$ with measured features $\tilde{\mu}_{k}=\int\varphi_k(\boldsymbol{X})\,\tilde{f}_{\boldsymbol{X}}(\boldsymbol{X},T)\,d\boldsymbol{X}$; number of sampled characteristics $N\ge2$; initial density $f_0$; flow $\Phi_t(\cdot;\boldsymbol{\alpha})$ and divergence $h_{\boldsymbol{\alpha}}=\nabla_{\boldsymbol{q}}\!\cdot\boldsymbol{R}$.
\State \textbf{Output:} iterates $\{\boldsymbol{\alpha}_\ell\}$ approaching a stationary point of $\mathcal{J}_{K}$.
\State Initialize $H_0\gets\varepsilon I_p$.
\For{$\ell=0,1,2,\dots$}
  \State Draw $\{\boldsymbol{q}_0^{(n)}\}_{n=1}^N\stackrel{\mathrm{iid}}{\sim} f_0$.
  \For{$n=1,\dots,N$}
    \State Integrate $\dot{\boldsymbol{q}}=\boldsymbol{R}(\boldsymbol{q},t;\boldsymbol{\alpha}_\ell)$ forward from $\boldsymbol{q}_0^{(n)}$; record $\boldsymbol{q}_T^{(n)}$.
    \State Backward-integrate the sensitivity~\eqref{eq:backward-sensitivity} on $[T,0]$ to obtain $S_{\boldsymbol{\alpha}_\ell}^{(n)}(0)$; evaluate $r^{(n)}(T)$ via Theorem~\ref{thm:G_formula}.
    \State Assemble the Eulerian sensitivity $G^{(n)}\gets r^{(n)}(T)+S_{\boldsymbol{\alpha}_\ell}^{(n)}(0)^{\!\top}\nabla\log f_0(\boldsymbol{q}_0^{(n)})$ via Theorem~\ref{thm:G_formula}.
    \State $\mu_k^{(n)}\gets\varphi_k(\Pi_{\boldsymbol{X}}\boldsymbol{q}_T^{(n)})$ for $k=1,\dots,K$.
  \EndFor
  \State Crossed U-statistic gradient:
  \[
  \widehat{\boldsymbol{g}}_\ell
  \;\gets\;
  \sum_{k=1}^K\frac{1}{N(N-1)}\sum_{i\neq j}\bigl(\mu_k^{(i)}-\tilde{\mu}_{k}\bigr)\,\mu_k^{(j)}\,G^{(j)}.
  \]
  \State Shampoo preconditioner:
  \[
  H_{\ell+1}\gets\beta H_{\ell}+(1-\beta)\,\widehat{\boldsymbol{g}}_\ell\widehat{\boldsymbol{g}}_\ell^{\!\top}.
  \]
  \State Preconditioned step:
  \[
  \boldsymbol{\alpha}_{\ell+1}
  \gets
  \boldsymbol{\alpha}_\ell-\eta_\ell H_{\ell+1}^{-1/2}\widehat{\boldsymbol{g}}_\ell.
  \]
\EndFor
\end{algorithmic}
\end{algorithm}

\section{Convergence, Stability, and Accuracy}
\label{sec:algorithm}

\del{The convergence behavior of \ref{alg:crossed-sgd} is
separated into the stability of matching the model parameters, and the accuracy of matching the observations.
The mean-square parameter error and the weak loss both decay as $O(1/(N\ell))$~\eqref{eq:msa_rate}, so the
parameter error falls as $\ell^{-1/2}$ and the weak loss as $\ell^{-1}$, each improving as
$N^{-1/2}$ in the number of sampled characteristics used during gradient descent.
If identifiability fails, as in overparameterized or rank-deficient settings, the asymptotic
parameter rate need not be reached, yet the iteration still drives the weak loss to a low residual, so accuracy persists even if parameter recovery is impossible.}
\qw{The stochastic gradient iteration is assessed by two criteria that, in an inverse problem, need not agree.
\emph{Accuracy} is the extent to which the recovered dynamics reproduce the observed marginal distributions, measured by the decay of the weak loss.
\emph{Stability} is the extent to which the true parameters $\boldsymbol{\alpha}^\star$ are themselves recovered.
The two must be distinguished because the data are marginal, and different parameters can produce nearly identical marginal statistics.
Accuracy is therefore attainable if the iteration converges, whereas stability holds only if the observed statistics determine $\boldsymbol{\alpha}^\star$ uniquely.
If identifiability fails, as in overparameterized or rank-deficient settings, the asymptotic parameter rate need not be reached, yet the iteration still drives the loss low, so accuracy persists even if parameter recovery is impossible.}


\subsection{Convergence Rate}

Let the uniform constants attached to Assumption~\ref{ass:char_regularity} be
\begin{equation}
\label{eq:uniform_constants}
\begin{aligned}
L_R &:= \sup_{(\boldsymbol{q},t,\boldsymbol{\alpha})}\|\nabla_{\boldsymbol{q}}\boldsymbol{R}(\boldsymbol{q},t;\boldsymbol{\alpha})\|, &
M_R &:= \sup_{(\boldsymbol{q},t,\boldsymbol{\alpha})}\|\partial_{\boldsymbol{\alpha}}\boldsymbol{R}(\boldsymbol{q},t;\boldsymbol{\alpha})\|, \\
L_h &:= \sup_{(\boldsymbol{q},t,\boldsymbol{\alpha})}\|\nabla_{\boldsymbol{q}}h_{\boldsymbol{\alpha}}(\boldsymbol{q},t)\|, &
M_h &:= \sup_{(\boldsymbol{q},t,\boldsymbol{\alpha})}\|\partial_{\boldsymbol{\alpha}}h_{\boldsymbol{\alpha}}(\boldsymbol{q},t)\|,
\end{aligned}
\end{equation}
together with the finite test-function bound $M_\varphi:=\max_{1\le k\le K}\|\varphi_k\|_{L^\infty(\mathbb{R}^{d_x})}$ and the finite score second moment $C_{\nabla\! f}^2:=\mathbb{E}_{\boldsymbol{q}_0\sim f_0}\|\nabla\log f_0(\boldsymbol{q}_0)\|^2$.

\begin{lemma}[Pathwise bounds and the Eulerian second moment]
\label{lem:pathwise-bounds}
\qw{
Let Assumption~\ref{ass:char_regularity} and the constants~\eqref{eq:uniform_constants} hold.
The backward sensitivity $S_{\boldsymbol{\alpha}}$ of Theorem~\ref{thm:G_formula} obeys $\|S_{\boldsymbol{\alpha}}(t)\|\le\Sigma_{T-t}$ for every $t\in[0,T]$, where
\begin{equation}
\label{eq:S-bound}
\Sigma_s
:=
M_R\,\frac{e^{L_R s}-1}{L_R},
\end{equation}
with the convention $\Sigma_s=M_R s$ at $L_R=0$; the bound is largest at $t=0$, so it holds uniformly on $[0,T]$ with the single constant $\Sigma_T$.
Consequently the parametric divergence sensitivity obeys $\|r(T)\|\le C_r:=(M_h+L_h\Sigma_T)T$, and the Eulerian sensitivity has second moment
\begin{equation}
\label{eq:G-second-moment}
\mathbb{E}\|G\|^2
\le
\Gamma_T^2
:=
2C_r^2+2C_{\nabla\! f}^2\Sigma_T^2 ,
\end{equation}
the expectation being taken over $\boldsymbol{q}_0\sim f_0$.
}
\end{lemma}

\qw{
\begin{proof}
Integrating~\eqref{eq:backward-sensitivity} backward from $S_{\boldsymbol{\alpha}}(T)=\boldsymbol{0}$ gives
\[
S_{\boldsymbol{\alpha}}(t)
=
-\int_t^T\Psi(t,s)\,\partial_{\boldsymbol{\alpha}}\boldsymbol{R}(\boldsymbol{q}(s),s;\boldsymbol{\alpha})\,ds,
\]
with $\Psi$ the state-transition matrix of $\dot{\boldsymbol{q}^{\prime}}=\nabla_{\boldsymbol{q}}\boldsymbol{R}\,\boldsymbol{q}^{\prime}$.
Gr\"onwall's inequality with $\|\nabla_{\boldsymbol{q}}\boldsymbol{R}\|\le L_R$ gives $\|\Psi(t,s)\|\le e^{L_R(s-t)}$ for $s\ge t$, and $\|\partial_{\boldsymbol{\alpha}}\boldsymbol{R}\|\le M_R$ then yields
\[
\|S_{\boldsymbol{\alpha}}(t)\|
\le
M_R\int_t^Te^{L_R(s-t)}\,ds
=
\Sigma_{T-t},
\]
which increases as $t$ decreases.
Taking norms in~\eqref{eqn:r-formula} and using $\|\partial_{\boldsymbol{\alpha}}h_{\boldsymbol{\alpha}}\|\le M_h$ with $\|(\nabla_{\boldsymbol{q}}h_{\boldsymbol{\alpha}})^{\!\top}S_{\boldsymbol{\alpha}}(t)\|\le L_h\Sigma_T$, which requires the bound uniformly on $[0,T]$ and not only at $t=0$, gives $\|r(T)\|\le C_r$.
Finally the decomposition~\eqref{eqn:G-decomposition} and Young's inequality give $\|G\|^2\le2\|r(T)\|^2+2\|S_{\boldsymbol{\alpha}}(0)\|^2\|\nabla\log f_0(\boldsymbol{q}(0))\|^2$; the backward characteristic through $\boldsymbol{q}_T=\Phi_T(\boldsymbol{q}_0;\boldsymbol{\alpha})$ returns to $\boldsymbol{q}(0)=\boldsymbol{q}_0$, so the expectation of the last factor against $f_0$ is $C_{\nabla\! f}^2$, which gives~\eqref{eq:G-second-moment}.
\end{proof}

The amplification $e^{L_RT}$ carried by $\Sigma_T$ is the backward-anchored counterpart of the forward bound $\|\Psi(T,s)\|\le e^{L_R(T-s)}$ on the sensitivity anchored at $\boldsymbol{q}_0$.
The same tangent growth that degrades the conditioning of $\mathcal{I}_T$ reappears here as growth of the gradient noise, so signal and noise share the rate and the usable time horizon is capped rather than merely expensive.

Each iteration evaluates the crossed estimator~\eqref{eq:crossed-estimator} on a fresh ensemble of $N$ characteristics, then forms the preconditioner~\eqref{eq:shampoo-update} and takes the parameter step~\eqref{eq:sgd_update} along it.
Both updates see the true gradient only through $\widehat{\boldsymbol{g}}$, so its deviation from that gradient is what the iteration inherits, and the size of the deviation in $N$ sets the rate of convergence.

\begin{theorem}[Finite-sample variance of the crossed gradient estimator]
\label{thm:lip-variance}
Let $\{\boldsymbol{q}_0^{(n)}\}_{n=1}^N\sim f_0$ be i.i.d.\ with $N\ge2$ and let the hypotheses of Lemma~\ref{lem:pathwise-bounds} hold.
For a fixed feature index $k$ write $\mu_k^{(n)}:=\varphi_k(\Pi_{\boldsymbol{X}}\boldsymbol{q}_T^{(n)})$ and
\[
\Delta_k:=\mu_k(\boldsymbol{\alpha})-\tilde{\mu}_k,
\qquad
b_k:=\mathbb{E}\bigl[\mu_k^{(n)}G^{(n)}\bigr]=\partial_{\boldsymbol{\alpha}}\mu_k(\boldsymbol{\alpha}),
\]
the second identity by Lemma~\ref{lem:score_identity}, together with the per-sample spreads $\sigma_k^2:=\mathrm{Var}\bigl(\mu_k^{(n)}\bigr)$ and $v_k^2:=\mathbb{E}\|\mu_k^{(n)}G^{(n)}-b_k\|^2$.
The per-feature crossed U-statistic estimator,
\begin{equation}
\label{eq:gk-hat}
\widehat g_k
:=
\frac{1}{N(N-1)}
\sum_{i\neq j}
\bigl(\mu_k^{(i)}-\tilde{\mu}_k\bigr)\,\mu_k^{(j)}\,G^{(j)}
\in\mathbb{R}^p ,
\end{equation}
then satisfies, for every $N\ge2$,
\begin{equation}
\label{eq:var-gk-bound}
\Bigl(\mathbb{E}\bigl\|\widehat g_k-\mathbb{E}[\widehat g_k]\bigr\|^2\Bigr)^{1/2}
\le
\frac{\sigma_k\|b_k\|+|\Delta_k|\,v_k}{\sqrt{N}}
+
\frac{2M_\varphi v_k}{\sqrt{N}-1} ,
\end{equation}
and in particular $\mathbb{E}\|\widehat g_k-\mathbb{E}[\widehat g_k]\|^2\le49\,M_\varphi^4\Gamma_T^2/N$ for $N\ge4$, with $\Gamma_T$ given by~\eqref{eq:G-second-moment}.
The variance is therefore $O(1/N)$ with a constant that grows as $e^{2L_RT}$ through $\Sigma_T$.
The first contribution carries the data misfit $\Delta_k$ as a factor and vanishes as the fit becomes exact, so near $\boldsymbol{\alpha}^\star$ the gradient noise is governed by $\sigma_k\|b_k\|$ and by the higher-order term.
\end{theorem}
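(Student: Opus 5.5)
Write $a_i:=\mu_k^{(i)}-\tilde{\mu}_k$ and $\boldsymbol{c}_j:=\mu_k^{(j)}G^{(j)}$, so that $\mathbb{E}a_i=\Delta_k$ and $\mathbb{E}\boldsymbol{c}_j=b_k$. The proof is a Hoeffding decomposition of the asymmetric two-sample U-statistic $\widehat g_k=\frac{1}{N(N-1)}\sum_{i\neq j}a_i\boldsymbol{c}_j$. The centered kernel splits exactly as
\[
a_i\boldsymbol{c}_j-\Delta_k b_k=(a_i-\Delta_k)\,b_k+\Delta_k(\boldsymbol{c}_j-b_k)+(a_i-\Delta_k)(\boldsymbol{c}_j-b_k).
\]
Averaging over ordered pairs $i\ne j$ gives
\[
\widehat g_k-\mathbb{E}\widehat g_k=\frac{b_k}{N}\sum_i(a_i-\Delta_k)+\frac{\Delta_k}{N}\sum_j(\boldsymbol{c}_j-b_k)+D_N,
\]
where $D_N:=\frac{1}{N(N-1)}\sum_{i\ne j}(a_i-\Delta_k)(\boldsymbol{c}_j-b_k)$ is the degenerate part. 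Unbiasedness here is Proposition~\ref{prop:crossed_ustatistic}.

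We then apply Minkowski's inequality in $L^2(\Omega;\mathbb{R}^p)$ to the three pieces rather than expanding the square. This is what produces the additive form of~\eqref{eq:var-gk-bound}.
\begin{itemize}
\item The first linear piece has $L^2$ norm $\|b_k\|\sigma_k/\sqrt N$, since the summands are i.i.d.\ and centered.
\item The second linear piece has $L^2$ norm $|\Delta_k|v_k/\sqrt N$.
\end{itemize}

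For the degenerate piece, the plan is to rewrite it as
\[
D_N=\frac{1}{N(N-1)}\Bigl[\Bigl(\sum_i(a_i-\Delta_k)\Bigr)\Bigl(\sum_j(\boldsymbol{c}_j-b_k)\Bigr)-\sum_i(a_i-\Delta_k)(\boldsymbol{c}_i-b_k)\Bigr].
\]
We bound each part crudely, using $|a_i-\Delta_k|\le 2M_\varphi$; this holds because $\varphi_k$ is bounded by $M_\varphi$, and we take $|\tilde\mu_k|\le M_\varphi$ since $\tilde\mu_k$ is a moment of a probability density.
\begin{itemize}
\item In the product term, pull out $\bigl|\sum_i(a_i-\Delta_k)\bigr|\le 2M_\varphi N$ and use the $L^2$ norm $\sqrt N v_k$ of the centered sum of $\boldsymbol{c}_j$. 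This gives a contribution of $2M_\varphi v_k\sqrt N/(N-1)$.
\item The diagonal term contributes at most $2M_\varphi v_k\sqrt N/(N(N-1))\cdot\sqrt N$, after Minkowski over $i$ and $\|a_i-\Delta_k\|_\infty\le 2M_\varphi$. This is of lower order.
\end{itemize}
The sharper alternative is the orthogonality of $(a_i-\Delta_k)(\boldsymbol{c}_j-b_k)$ across distinct ordered pairs: these terms are uncorrelated unless the pairs coincide or are transposes. That route gives $\|D_N\|_{L^2}\le 2\sigma_k v_k/\sqrt{N(N-1)}$. Either way, we then compare with $\frac{2M_\varphi v_k}{\sqrt N-1}$. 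Showing that the combined degenerate bound fits under the stated $1/(\sqrt N-1)$ constant is the step I expect to need the most care. I would try the transposed-pair covariance computation first, and fall back on the crude route with the elementary inequality $\sqrt N/(N-1)\le 1/(\sqrt N-1)$, absorbing the diagonal term into it.

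For the explicit corollary, we bound every constant by $M_\varphi$ and $\Gamma_T$:
\begin{itemize}
\item $\sigma_k\le M_\varphi$ and $|\Delta_k|\le 2M_\varphi$.
\item $v_k\le(\mathbb{E}\|\mu_k G\|^2)^{1/2}\le M_\varphi\Gamma_T$, by Lemma~\ref{lem:pathwise-bounds}.
\item $\|b_k\|\le M_\varphi\Gamma_T$, by Cauchy--Schwarz.
\end{itemize}
Substituting, the right side of~\eqref{eq:var-gk-bound} is at most $3M_\varphi^2\Gamma_T/\sqrt N+2M_\varphi^2\Gamma_T/(\sqrt N-1)$. For $N\ge4$ we have $\sqrt N-1\ge\sqrt N/2$, so the bound is at most $7M_\varphi^2\Gamma_T/\sqrt N$. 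Squaring gives $49M_\varphi^4\Gamma_T^2/N$.

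The $e^{2L_RT}$ growth is read off from $\Sigma_T$ inside $\Gamma_T^2$. Finally, the remark that the first contribution vanishes as the fit becomes exact follows directly from the factor $|\Delta_k|$ on the second linear piece.
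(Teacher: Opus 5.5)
Your proposal is correct and follows essentially the same route as the paper: the same split of $\widehat g_k-\Delta_k b_k$ into the i.i.d.\ linear part $\bar a\,b_k+\Delta_k\bar\beta$ plus a degenerate remainder written as (full product minus diagonal)$/(N(N-1))$. It then uses Minkowski's inequality and the almost-sure bound $|a^{(n)}|\le 2M_\varphi$. Your crude route reproduces the stated constant exactly, since $\sqrt N/(N-1)+1/(N-1)=1/(\sqrt N-1)$, and your orthogonality route also fits under it because $\sqrt{N(N-1)}\ge\sqrt N-1$; your corollary constants ($3+4=7$, hence $49$) match the paper's.
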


\begin{proof}
Write $A^{(n)}:=\mu_k^{(n)}-\tilde{\mu}_k=\Delta_k+a^{(n)}$ and $B^{(n)}:=\mu_k^{(n)}G^{(n)}=b_k+\beta^{(n)}$ with $a^{(n)},\beta^{(n)}$ centered, $|a^{(n)}|\le2M_\varphi$ almost surely, $\mathbb{E}[(a^{(n)})^2]=\sigma_k^2$ and $\mathbb{E}\|\beta^{(n)}\|^2=v_k^2$.
Separating the double sum into its full and diagonal parts and substituting these decompositions gives
\[
\widehat g_k-\Delta_k b_k
=
\bar a\,b_k+\Delta_k\bar\beta
+
\frac{N}{N-1}\,\bar a\bar\beta
-
\frac{1}{N(N-1)}\sum_{n=1}^N a^{(n)}\beta^{(n)},
\]
with $\bar a$ and $\bar\beta$ the sample means, the cancellation of the $\Delta_kb_k$, $\bar ab_k$ and $\Delta_k\bar\beta$ coefficients being the unbiasedness of Proposition~\ref{prop:crossed_ustatistic}.
The first two terms form an average of $N$ i.i.d.\ centered vectors, so their $L^2$ norm is $N^{-1/2}\bigl(\mathbb{E}\|a b_k+\Delta_k\beta\|^2\bigr)^{1/2}\le N^{-1/2}(\sigma_k\|b_k\|+|\Delta_k|v_k)$ by the Minkowski inequality.
For the remaining two terms, $|\bar a|\le2M_\varphi$ almost surely and $\|\bar\beta\|_{L^2}=v_k/\sqrt N$ give $\|\bar a\bar\beta\|_{L^2}\le2M_\varphi v_k/\sqrt N$, while the triangle inequality gives $\|\sum_n a^{(n)}\beta^{(n)}\|_{L^2}\le2NM_\varphi v_k$, so that together they are bounded in $L^2$ by
\[
2M_\varphi v_k\Bigl(\frac{\sqrt N}{N-1}+\frac{1}{N-1}\Bigr)
=
\frac{2M_\varphi v_k}{\sqrt N-1} .
\]
One more application of the Minkowski inequality gives~\eqref{eq:var-gk-bound}.
Boundedness of the test functions gives $\sigma_k\le M_\varphi$, $|\Delta_k|\le2M_\varphi$, $\|b_k\|\le M_\varphi\Gamma_T$ and $v_k\le M_\varphi\Gamma_T$, and $(\sqrt N-1)^{-1}\le2N^{-1/2}$ for $N\ge4$, whence the right-hand side is at most $7M_\varphi^2\Gamma_TN^{-1/2}$.
\end{proof}
}

\begin{remark}[Computational consequences]
\label{rem:practical}
The variance constant in Theorem~\ref{thm:lip-variance} grows rapidly with the product $L_R T$, reflecting the amplification of sensitivity along unstable characteristics.  
Consequently, the time horizon should remain moderate in strongly chaotic regimes.  
\end{remark}

\begin{remark}[Polynomial test functions]
\qw{The explicit variance constant in Theorem~\ref{thm:lip-variance} applies only to bounded test functions. For the unbounded polynomial moments used below, unbiasedness and the $O(N^{-1})$ variance rate instead require the corresponding second moments to be finite; the displayed constant is not claimed.}
\end{remark}

\begin{remark}[Stochastic-approximation bound]
\label{rem:sgd-rate}
Combining Theorem~\ref{thm:lip-variance} with the 
nonconvex SGD descent lemma~\cite{bottou2018optimization}, suppose that $\partial_{\boldsymbol{\alpha}}\mathcal{J}_{K,M}$ is $L_g$-Lipschitz on the relevant sublevel set, the stochastic gradient is unbiased with uniformly bounded second moment, and the preconditioners are uniformly positive definite and bounded:
\[
0<\lambda_- I_p \preceq H_\ell^{-1/2}
\preceq \lambda_+ I_p < \infty.
\]

For the step-size schedule $\eta_\ell = \frac{\eta_0}{1+\ell/\tau}$, application of the 
nonconvex SGD descent inequality in \cite{bottou2018optimization} leads to
\begin{equation}
\label{eq:stoch-appr-bound}
\min_{1\le\ell\le L}
\mathbb{E}
\|
\partial_{\boldsymbol{\alpha}}
\mathcal{J}_{K,M}(\boldsymbol{\alpha}_\ell)
\|^2
=
O\!\left(\frac{1}{\log L}\right).
\end{equation}
up to constants depending on the variance bound~\eqref{eq:G-second-moment}, $\lambda_\pm$, and the initial objective gap.

\end{remark}
\subsection{Stability and accuracy}
\label{subsec:stability_accuracy}

The stochastic approximation bound in~\eqref{eq:stoch-appr-bound} controls the expected norm of the gradient of the weak loss rather than the parameter estimation error itself. Consequently, although the weak loss improves the stability of the iterative parameter inference, it does not necessarily guarantee accurate recovery of the parameter or the associated probability density function. As is common in inverse problems, this reflects a trade-off between stability and accuracy.

\paragraph{Stability regime}
In the stable regime, the convergence rates of the parameter and the weak loss are different, and a comparison of these rates can thus be used to identify the stability regime. Both convergence rates can be estimated starting from the assumption that the Fisher information matrix $ \mathcal{I}_T=\mathcal{S}^{\top}\mathcal{S}$ is positive definite at the reference parameter $\boldsymbol{\alpha}^\star$.  
The convergence rate of the local parameter error, $\boldsymbol{e}_\ell
:=
\boldsymbol{\alpha}_\ell-\boldsymbol{\alpha}^\star.$
can be estimated from a
local linearization of the stochastic-gradient iteration using stochastic-approximation theory~\cite{fabian1968asymptotic,borkar2008stochastic}
\[
\boldsymbol{e}_{\ell+1}
=
\left(
I-\eta_\ell P\,\mathcal{I}_T
\right)\boldsymbol{e}_\ell
-
\eta_\ell P\,\boldsymbol{\xi}_\ell,
\]
where $P\succ0$ is the limiting preconditioner and $\boldsymbol{\xi}_\ell$ is the stochastic-gradient noise. This noise is zero-centered because the crossed U-statistic estimator is unbiased, and its covariance is of order $N^{-1}$ by the finite-sample variance bound~\eqref{eq:var-gk-bound}, which is where the $N^{-1}$ in~\eqref{eq:msa_rate} originates.

For the inverse-time schedule $\eta_\ell \sim \frac{a}{\ell}$,
stochastic-approximation asymptotics gives  
the mean-square parameter error satisfies
\begin{equation}
\label{eq:msa_rate}
\mathbb{E}\|\boldsymbol{\alpha}_\ell-\boldsymbol{\alpha}^\star\|^2
=
O\!\left(\frac{1}{N\ell}\right),
\end{equation}
provided that
\begin{equation}
    \label{eqn:strong_requirement}
a\,\lambda_{\min}(P\mathcal{I}_T)>\frac12,
\end{equation}
And the local quadratic model gives the expected weak loss with the same asymptotic scaling:
\[
\mathbb{E}\bigl[
\mathcal{J}_{K,M}(\boldsymbol{\alpha}_\ell)
\bigr] - \mathcal{J}_{K,M}(\boldsymbol{\alpha}^\star)
=
O\!\left(\frac{1}{N\ell}\right).
\]
Thus, in the identifiable regime, the parameter error decays as $\ell^{-1/2}$ while the weak loss decays as $\ell^{-1}$. In many modern overparameterized machine-learning settings, however, the sensitivity matrix is highly rank-deficient or poorly conditioned, so the condition~\eqref{eqn:strong_requirement} may fail and the identifiable asymptotic regime need never be attained. Convergence toward low weak-loss residuals remains meaningful even if parameter recovery itself is ill-posed.

The convergence constants depend on the conditioning of $\mathcal{I}_T$.  
For the unpreconditioned iteration $P=I_p$, small values of $\lambda_{\min}(\mathcal{I}_T)$ lead to slow convergence and poor parameter stability.  
A full-matrix adaptive preconditioner can partially mitigate this anisotropy by approximately whitening dominant gradient-covariance directions.  
However, adaptive methods such as Shampoo track the stochastic gradient second moment $\mathbb{E}[\widehat{\boldsymbol g}_\ell\widehat{\boldsymbol g}_\ell^\top]$
rather than the sensitivity Gram matrix $\mathcal{I}_T$ itself, so alignment between the two generally requires additional assumptions on the local gradient-noise covariance.

The local quadratic model also yields the residual-to-error estimate
\begin{equation}
\label{eq:residual_error}
\|\boldsymbol{\alpha}-\boldsymbol{\alpha}^\star\|
\le
\sqrt{
\frac{
2\mathcal{J}_{K,M}(\boldsymbol{\alpha})
}{
\lambda_{\min}(\mathcal{I}_T)
}
},
\end{equation}
which shows that a small weak loss certifies a small parameter error only if the inverse problem is locally well-conditioned.

\paragraph{Accuracy regime}
The situation changes fundamentally if the parameter-to-feature map is non-injective.  
If $p \gg MK $, the sensitivity matrix $\mathcal{S}$ is necessarily rank deficient, and many distinct parameter vectors generate indistinguishable collections of weak features.  
In this regime, convergence of the iterates $\boldsymbol{\alpha}_\ell$ toward a unique reference parameter is neither expected nor meaningful.

The relevant criterion is instead the decay of the weak loss, $\mathcal{J}_{K,M}(\boldsymbol{\alpha}_\ell)$, 
or equivalently, the agreement between predicted and observed marginal statistics.  
The stochastic-approximation estimate of Remark~\ref{rem:sgd-rate} gives
\[
\min_{1\le\ell\le L}
\mathbb{E}
\|
\partial_{\boldsymbol{\alpha}}
\mathcal{J}_{K,M}(\boldsymbol{\alpha}_\ell)
\|^2
=
O\!\left(\frac{1}{\log L}\right),
\]
independently of the parameter dimension $p$ and without requiring invertibility of $\mathcal{I}_T$.
Thus, even in non-identifiable settings, stochastic optimization still drives the weak residual toward stationarity.

\begin{table}[h]
\centering
\begin{tabular}{llccc}
\hline
& metric  & $N$ & $\ell$ & $T$ \\
\hline
gradient noise
& $\bigl(\mathbb{E}\|\widehat{\boldsymbol{g}}-\mathbb{E}[\widehat{\boldsymbol{g}}]\|^2\bigr)^{1/2}$
& $N^{-1/2}$ & --- & $e^{L_RT}$ \\
accuracy
& $\mathbb{E}\bigl[\mathcal{J}_{K,M}(\boldsymbol{\alpha}_\ell)\bigr]-\mathcal{J}_{K,M}(\boldsymbol{\alpha}^\star)$
& $N^{-1}$ & $\ell^{-1}$ & $e^{2L_RT}$ \\
stability
& $\bigl(\mathbb{E}\|\boldsymbol{\alpha}_\ell-\boldsymbol{\alpha}^\star\|^2\bigr)^{1/2}$
& $N^{-1/2}$ & $\ell^{-1/2}$ & $e^{L_RT}$ \\
stationarity
& $\min_{1\le\ell\le L}\mathbb{E}\|\partial_{\boldsymbol{\alpha}}\mathcal{J}_{K,M}(\boldsymbol{\alpha}_\ell)\|^2$
& --- & $1/\log L$ & $e^{2L_RT}$ \\
\hline
\end{tabular}
\vspace{5pt}
\caption{Dependence of the gradient noise, the weak loss, the parameter error, and the weak-loss gradient on the size of characteristic ensemble $N$, the iteration count $\ell$, and the time horizon $T$. The $N$ and $\ell$ entries are decay rates; the $T$ entries are the growth of the constant with observation time.}
\label{tab:rates}
\end{table}

To summarize, Table~\ref{tab:rates} collates the metrics for convergence.
The number of ensemble members, $N$, acts on the rates only through the gradient noise, so raising $N$ lowers the curves without changing their slopes, whereas the iteration count carries the rates themselves.
The horizon enters the constants rather than the rates, through $\Gamma_T$, and at the same exponential rate as the sensitivity, so the two offset each other and the usable horizon is bounded.
Accuracy and stationarity need no invertibility, whereas stability rests on $\mathcal{I}_T\succ\boldsymbol{0}$ and on the step-size condition~\eqref{eqn:strong_requirement}, and it is exactly this pair of requirements that fails in the overparameterized regime.

\section{Numerical experiments}
\label{sec:experiments}


\subsection{Experiment 1: Hidden modes of a linear system from a single observable}
\label{subsec:oscillator}

\subsubsection{Model description}

The first experiment is a minimal, well-conditioned instance of the inverse
problem with genuine marginalization: three coupled linear modes are evolved,
but only the first is observed, and the two hidden modes must be inferred from
the observable's marginal alone.
Every quantity \del{of \S\ref{sec:problem_setup}--\S\ref{sec:algorithm}}of the preceding analysis is available
in closed form, so the asymptotic rate~\eqref{eq:msa_rate} can be validated
without the confounding effect of ill-conditioning.
Consider the three-dimensional damped rotation chain
\begin{equation}
\label{eq:osc_ode}
\dot{\boldsymbol{q}}(t) = A(\boldsymbol{\alpha})\,\boldsymbol{q}(t),
\qquad
A(\boldsymbol{\alpha}) =
\begin{pmatrix} -\gamma & \omega_1 & 0 \\ -\omega_1 & -\gamma & \omega_2 \\ 0 & -\omega_2 & -\gamma \end{pmatrix},
\qquad
\boldsymbol{q}(0)\sim\mathcal{N}(\boldsymbol{\mu}_0,\Sigma_0),
\end{equation}
with three unknown parameters $\boldsymbol{\alpha}=(\gamma,\omega_1,\omega_2)$: a common damping
$\gamma$ and two skew couplings $\omega_1,\omega_2$ between neighboring modes.
In the notation of the general formulation~\eqref{eqn:ODE}, the state is
$\boldsymbol{q}=(x,y_1,y_2)\in\mathbb{R}^3$ and the vector field is the linear,
autonomous map $\boldsymbol{R}(\boldsymbol{q},t;\boldsymbol{\alpha})=A(\boldsymbol{\alpha})\,\boldsymbol{q}$.
Only the first mode $x$ is observed; $y_1$ and $y_2$ are hidden and enter the
inference solely through their imprint on the marginal of $x$.
The observable is a damped multi-frequency oscillation whose envelope encodes
the damping $\gamma$ and whose spectrum encodes the couplings $\omega_1,\omega_2$, so all three
parameters are strongly imprinted on the marginal of $x$.
Both the marginal moments and the sensitivity Jacobian~\eqref{eq:sens_jacobian}
follow analytically from $e^{A(\boldsymbol{\alpha})t}$, which makes this system an
exact reference for the Monte Carlo estimators and the convergence rate.

\subsubsection{Experimental setup}

The true parameters are $\boldsymbol{\alpha}^\star=(\gamma^\star,\omega_1^\star,\omega_2^\star)=(0.5,2,1)$,
the initial distribution is $\boldsymbol{q}_0\sim\mathcal{N}(\boldsymbol{\mu}_0,\Sigma_0)$
with $\boldsymbol{\mu}_0=(1,0.5,0.3)^{\!\top}$ and $\Sigma_0=0.25\,I_3$, and the
observation times are $M=5$ points equally spaced in $[0.3,3]$.
At each time only the first two raw moments $\mathbb{E}[x]$ and $\mathbb{E}[x^2]$
of the observed mode are recorded ($K=2$ test functions), giving $MK=10$ scalar
constraints for $p=3$ parameters.
At $\boldsymbol{\alpha}^\star$ the sensitivity Jacobian has singular values
$\{1.300,0.837,0.523\}$, so the Jacobian condition number is
$\kappa(\mathcal{S})\approx2.48$ and the sensitivity Gramian
$\mathcal{I}_T=\mathcal{S}^{\!\top}\mathcal{S}$ has $\kappa(\mathcal{I}_T)\approx6.17$.
The solution is unique: the sensitivity Gramian is positive definite, so
$\boldsymbol{\alpha}^\star$ is a locally isolated minimizer, and the nonzero hidden
means $\mu_{0,1},\mu_{0,2}$ break the $(\omega_1,\omega_2)\mapsto(-\omega_1,-\omega_2)$ reflection
symmetry of the couplings, so no sign ambiguity remains; a scan of the parameter
box recovers $\boldsymbol{\alpha}^\star$ as the only match to the observed moments.
The problem is thus well conditioned, and the local strong-convexity hypothesis \del{of
\S\ref{subsec:stability_accuracy} }holds on a large neighborhood of
$\boldsymbol{\alpha}^\star$, so a single initialization suffices to exhibit the rate.

\subsubsection{Results}
\label{sec:results_linear}
At every iteration of Algorithm~\ref{alg:crossed-sgd}, a fresh set of characteristics
$\{\boldsymbol{q}_0^{(n)}\}_{n=1}^N\sim\mathcal{N}(\boldsymbol{\mu}_0,\Sigma_0)$
drives the crossed U-statistic gradient~\eqref{eq:multi_time_estimator};
the Shampoo preconditioner~\eqref{eq:shampoo-update} uses $\beta=0.99$,
$\varepsilon=10^{-3}$, and the step size follows $\eta_\ell=0.5/(\ell+50)$.

Before running the gradient descent, we verify that the crossed U-statistic gradient
$\widehat{\boldsymbol{g}}$ and the empirical sensitivity Jacobian
$\widehat{\mathcal{S}}$ converge to their closed-form references at the Monte Carlo
rate. At the test point $\boldsymbol{\alpha}_{\rm test}=\boldsymbol{\alpha}^\star+0.05\,(1,1,1)/\sqrt3$
we draw $n_{\rm trials}=400$ independent sets of characteristics at each
$N\in\{16,64,256,1024,4096,16384\}$.
Figure~\ref{fig:osc_validation} reports the four diagnostics.
The standard deviation of $\widehat{g}_j$ decays as $N^{-1/2}$ across three
decades of $N$, the rate
predicted by Theorem~\ref{thm:lip-variance}. The bias $|\mathbb{E}[\widehat{g}_j]-g_j|$
remains within about one Monte Carlo standard error of the mean at every $N$,
consistent with the exact unbiasedness of \del{Proposition~\ref{prop:crossed_ustatistic}}the crossed U-statistic estimator.
At $N=4096$ the per-component mean of $\widehat{g}_j$ matches the exact gradient within
the 95\% confidence interval, and the empirical Jacobian $\widehat{\mathcal{S}}$
converges to $\mathcal{S}$ at the same $N^{-1/2}$ rate.

\begin{figure}[h!]
    \centering
    \includegraphics[width=\linewidth]{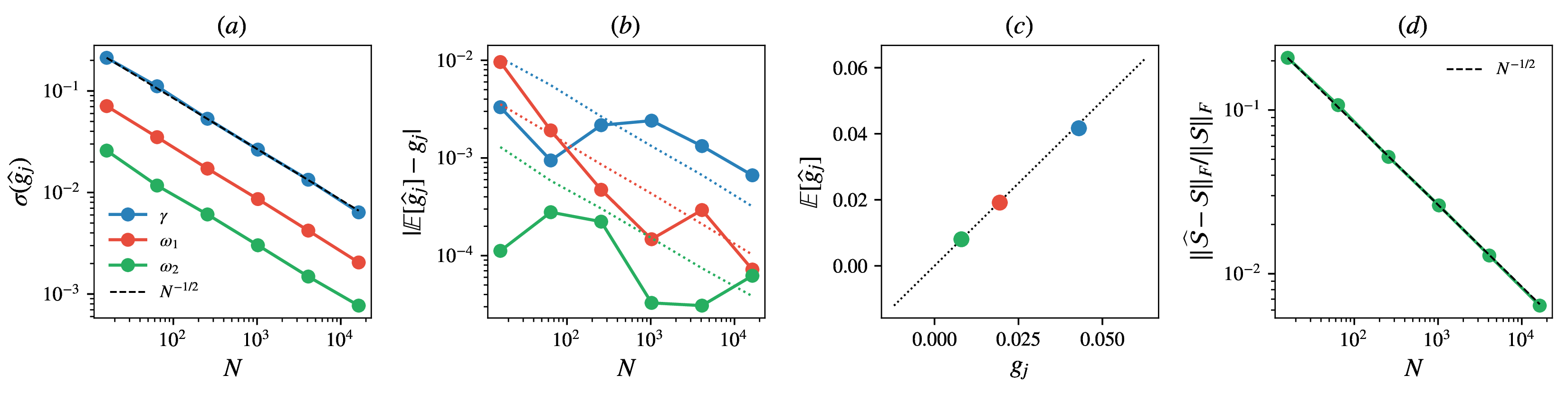}
    \vspace{-20pt}
    \caption{Validation of $\widehat{\boldsymbol{g}}$ and $\widehat{\mathcal{S}}$ at
    $\boldsymbol{\alpha}_{\rm test}$ with $\|\boldsymbol{\alpha}_{\rm test}-\boldsymbol{\alpha}^\star\|=0.05$,
    using $n_{\rm trials}=400$ independent sets of characteristics at each $N$.
    $(a)$~Standard deviation of $\widehat{g}_j$ versus $N$, against the $N^{-1/2}$ guide line.
    $(b)$~Bias $|\mathbb{E}[\widehat{g}_j]-g_j|$ versus $N$ (solid), against the Monte Carlo
    standard error $\sigma_j/\sqrt{n_{\rm trials}}$ (dotted).
    $(c)$~Mean of $\widehat{g}_j$ at $N=4096$ versus the exact $g_j$, with 95\% confidence
    interval; dotted line is $y=x$.
    $(d)$~Relative Frobenius distance $\|\widehat{\mathcal{S}}-\mathcal{S}\|_F/\|\mathcal{S}\|_F$
    versus $N$, against the $N^{-1/2}$ guide line.}
    \label{fig:osc_validation}
\end{figure}

Figure~\ref{fig:osc_convergence} reports the convergence of Algorithm~\ref{alg:crossed-sgd}
from the initialization $\boldsymbol{\alpha}_0=(0.8,1.5,1.5)$, at distance
$\|\boldsymbol{\alpha}_0-\boldsymbol{\alpha}^\star\|=0.61$ from the truth, over
$4\times10^4$ iterations and $N\in\{16,64,256,1024\}$, averaged over sixteen seeds.
Because the problem is well conditioned, both halves of the rate
prediction~\eqref{eq:msa_rate} hold globally, with no basin restriction.
The weak loss $\mathcal{J}_{K,M}(\boldsymbol{\alpha}_\ell)$ decays along the predicted
$\ell^{-1}$ rate.
The parameter error $\|\boldsymbol{\alpha}_\ell-\boldsymbol{\alpha}^\star\|$ decays along
the predicted $\ell^{-1/2}$ rate and a wider range of
iterations enters the asymptotic regime.
The curves also stack vertically by $N$, the error level dropping as the $1/N$ gradient
variance of Theorem~\ref{thm:lip-variance} would dictate; the corresponding $N^{-1/2}$
scaling of the gradient estimator itself is shown directly in
Figure~\ref{fig:osc_validation}$(a)$.
This experiment validates both halves of the rate prediction cleanly on a system where
the sensitivity Gramian is well conditioned, isolating the asymptotic rate from any
conditioning-limited transient.

\begin{figure}[h!]
    \centering
    \includegraphics[width=\linewidth]{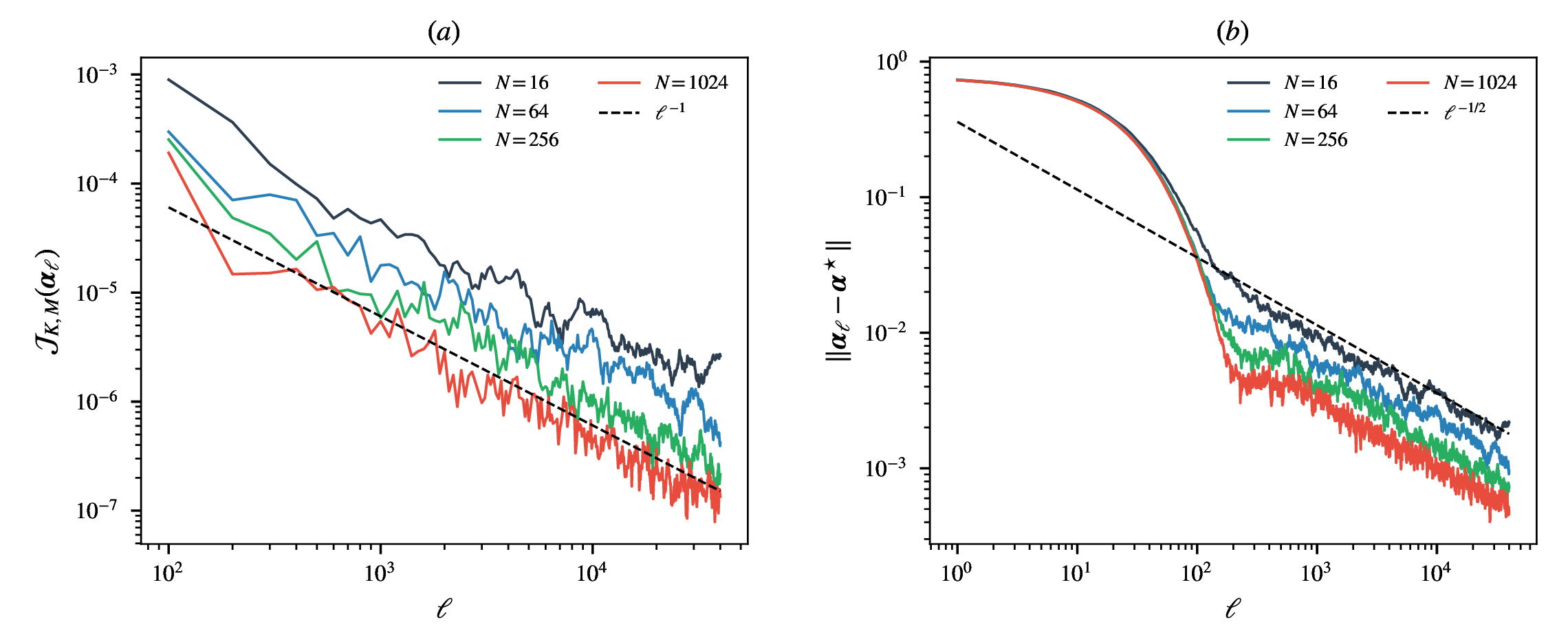}
    \vspace{-8pt}
    \caption{Convergence of Algorithm~\ref{alg:crossed-sgd} on the three-dimensional
    damped rotation chain with $\boldsymbol{\alpha}^\star=(0.5,2,1)$, step
    $\eta_\ell=0.5/(\ell+50)$,
    $4\times10^4$ iterations, $N\in\{16,64,256,1024\}$, averaged over sixteen seeds.
    $(a)$~Weak loss $\mathcal{J}_{K,M}(\boldsymbol{\alpha}_\ell)$ versus $\ell$, against
    the $\ell^{-1}$ guide line.
    $(b)$~Parameter error $\|\boldsymbol{\alpha}_\ell-\boldsymbol{\alpha}^\star\|$ versus
    $\ell$, against the $\ell^{-1/2}$ guide line.}
    \label{fig:osc_convergence}
\end{figure}

\subsection{Experiment 2: Nonlinear growth with a hidden mode}
\label{subsec:gompertz}

\subsubsection{Model description}

The second experiment carries the exact convergence check of \S\ref{subsec:oscillator}
to a genuinely nonlinear vector field.
The observable $w>0$ obeys a Gompertz growth law modulated by a hidden decaying
mode $\theta$,
\begin{equation}
\label{eq:gomp_ode}
\dot{w} = -a\,w\,\ln(w/w_\star) + c\,w\,\theta,
\qquad
\dot{\theta} = -b\,\theta,
\qquad
w_\star = e^{u_\star},
\end{equation}
so that in the notation of~\eqref{eqn:ODE} the state is $\boldsymbol{q}=(w,\theta)$ and the
vector field $\boldsymbol{R}(\boldsymbol{q};\boldsymbol{\alpha})=(-a\,w\ln(w/w_\star)+c\,w\theta,\,-b\,\theta)$
is nonlinear in $w$ through the $w\ln w$ and $w\theta$ terms.
Only $w$ is observed; $\theta$ is hidden.
The nonlinearity is exactly solvable: in the log variable $u=\ln w$ the system is
linear,
\begin{equation}
\label{eq:gomp_log}
\dot{u} = -a\,(u-u_\star) + c\,\theta,
\qquad
\dot{\theta} = -b\,\theta,
\end{equation}
so with a Gaussian initial law $w$ remains lognormal for all $t$ and every moment is
available in closed form,
\begin{equation}
\label{eq:gomp_moments}
\mathbb{E}[w^k(t)] = \exp\!\bigl(k\,\mu_u(t) + \tfrac12 k^2\,\sigma_u^2(t)\bigr),
\end{equation}
with $\mu_u(t),\sigma_u^2(t)$ the mean and variance of $u(t)$ from the linear
system~\eqref{eq:gomp_log}.
The experiment therefore has a nonlinear flow yet exact, known-parameter reference
moments, giving a solid convergence check on a nonlinear model before the black-box
neural-network vector fields introduced later.

\subsubsection{Experimental setup}

The inferred parameters are the Gompertz rate and the hidden coupling
$\boldsymbol{\alpha}=(a,c)$, with true values $\boldsymbol{\alpha}^\star=(1,0.8)$;
the hidden decay $b=1.5$ and the setpoint $u_\star=0$ are known.
The initial distribution is $(u_0,\theta_0)\sim\mathcal{N}(\boldsymbol{\mu}_0,\Sigma_0)$
with $\boldsymbol{\mu}_0=(0.2,1)^{\!\top}$ and $\Sigma_0=\mathrm{diag}(0.09,0.25)$,
and the first two moments $\mathbb{E}[w],\mathbb{E}[w^2]$ are observed
($K=2$ test functions) at $M=5$ times equally spaced in $[0.3,3]$, giving $MK=10$
constraints for $p=2$ parameters.
At $\boldsymbol{\alpha}^\star$ the sensitivity Jacobian has condition number
$\kappa(\mathcal{S})\approx5.7$; the closed-form moments~\eqref{eq:gomp_moments} agree
with a $4\times10^5$-particle Monte Carlo estimate to within $0.15\%$, confirming the
exact reference.

\subsubsection{Results}
The same numerical setup is adopted as in \S
\ref{sec:results_linear}.
The gradient and Jacobian validation yield similar results as the previous experiment.
Figure~\ref{fig:gomp_convergence} reports convergence from
$\boldsymbol{\alpha}_0=(1.4,1.3)$ over $6\times10^4$ iterations and
$N\in\{16,64,256,1024\}$, averaged over sixteen seeds.
On this nonlinear flow the weak loss decays along the $\ell^{-1}$ rate, the same
rate established analytically for the linear case, with the curves stacking by $N$ as
the $1/N$ gradient variance dictates.
The exact moments~\eqref{eq:gomp_moments} make this a ground-truth check: the recovered
parameters approach the known $\boldsymbol{\alpha}^\star$ at the predicted rate on a
nonlinear model.

\begin{figure}[h!]
    \centering
    \includegraphics[width=\linewidth]{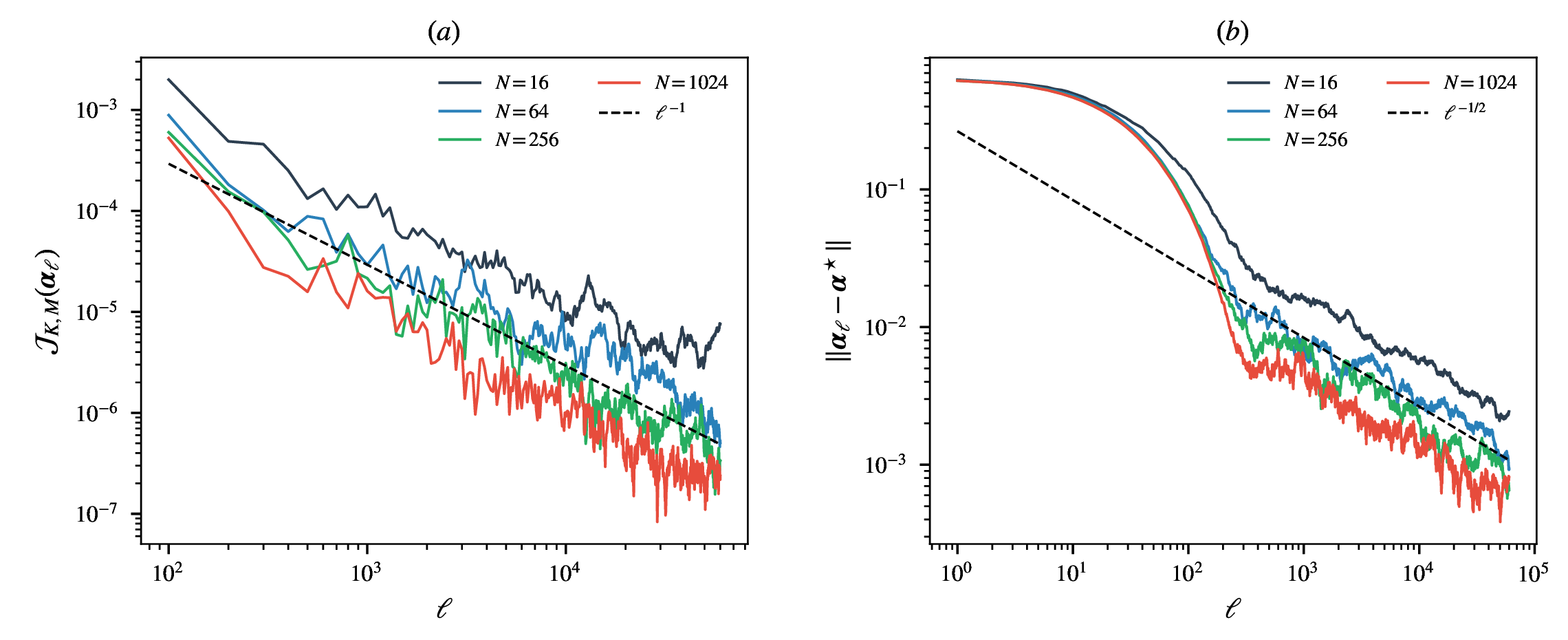}
    \vspace{-8pt}
    \caption{Convergence of Algorithm~\ref{alg:crossed-sgd} on the Gompertz model with
    $\boldsymbol{\alpha}^\star=(1,0.8)$, step $\eta_\ell=0.5/(\ell+50)$, $6\times10^4$
    iterations, $N\in\{16,64,256,1024\}$, sixteen seeds.
    $(a)$~weak loss versus $\ell$ against the $\ell^{-1}$ guide line.
    $(b)$~parameter error versus $\ell$ against the $\ell^{-1/2}$ guide line.}
    \label{fig:gomp_convergence}
\end{figure}

\subsection{Experiment 3: Bimodal marginals from a latent instability}
\label{subsec:bimodal}

\subsubsection{Model description}

The third experiment shows that a hidden variable can create structure in the observed marginal that is difficult for the tested observable-only model to reproduce.
A bistable observable $w$ is tilted by a hidden decaying mode $\theta$,
\begin{equation}
\label{eq:bimodal_ode}
\dot{w} = w\,(a - w^2) + c\,\theta,
\qquad
\dot{\theta} = -\lambda\,\theta,
\end{equation}
so in the notation of~\eqref{eqn:ODE} the state is $\boldsymbol{q}=(w,\theta)$ and the
vector field $\boldsymbol{R}(\boldsymbol{q};\boldsymbol{\alpha})=(w(a-w^2)+c\theta,\,-\lambda\theta)$
is a double well in $w$ with stable points $\pm\sqrt{a}$, perturbed by the latent tilt $c\theta$.
Only $w$ is observed.
Starting from a unimodal initial law for $w$ straddling the unstable point at the
origin, trajectories are driven into the two wells and the observed marginal of $w$
becomes bimodal (Figure~\ref{fig:bimodal_density}).

The latent variable provides a useful augmentation in this experiment.
A one-dimensional autonomous model $\dot{w}=R(w)$ generates its marginals as the
pushforward of the fixed initial law under a single flow, and that flow map
$w_0\mapsto w(t)$ is monotone at every time, because trajectories of a scalar ODE
cannot cross.
Consequently, all marginals must arise as pushforwards of the initial law under the
same one-parameter semigroup of monotone flow maps. This requirement imposes joint
temporal consistency across observation times, rather than allowing each marginal to
be matched independently. As the black-box comparison below shows, the tested scalar
neural ODE is less able than the latent model to match all observed marginals
simultaneously. We emphasize that monotonicity alone does not preclude a change in
modality.
The latent tilt $c\theta$ removes the restriction, since two particles with the same
$w_0$ but different $\theta_0$ are driven to opposite wells, so the induced map on $w$
is not monotone.
Section~\ref{subsub:neural_bimodal} makes this quantitative, fitting both a one-dimensional and a latent black-box vector field to the observed marginals and comparing the marginal sequences they recover.

\begin{figure}[h!]
    \centering
    \includegraphics[width=\linewidth]{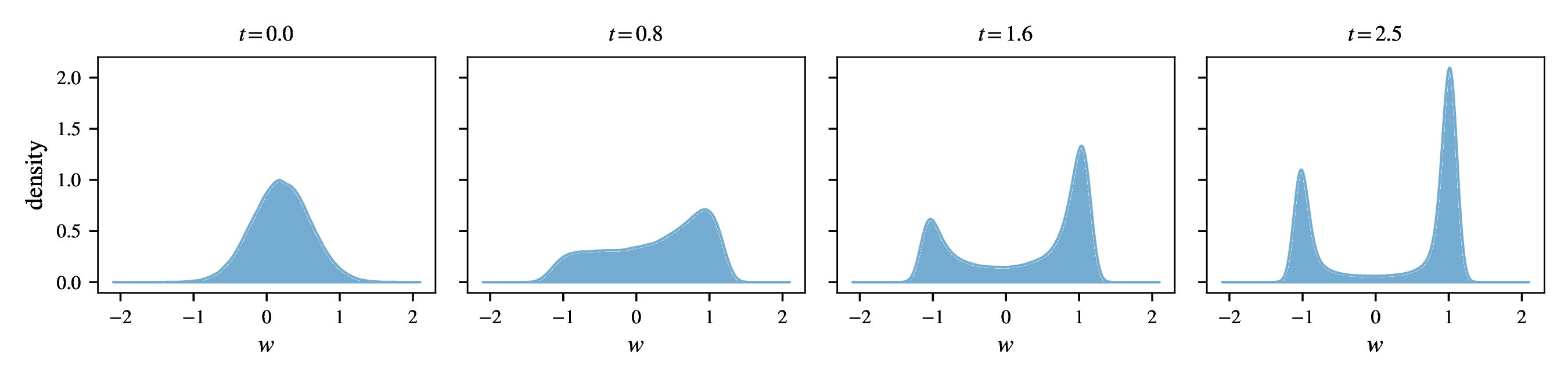}
    \vspace{-8pt}
    \caption{Marginal density of the observed $w$ under~\eqref{eq:bimodal_ode} at
    $\boldsymbol{\alpha}^\star=(1,0.8,1)$, from a unimodal initial law, developing into a
    bimodal distribution with peaks at $w=\pm1$.}
    \label{fig:bimodal_density}
\end{figure}

\subsubsection{Experimental setup}

The parameters are $\boldsymbol{\alpha}=(a,c,\lambda)$, the well location, the latent
coupling, and the latent decay, with true values $\boldsymbol{\alpha}^\star=(1,0.8,1)$.
The initial distribution is $w_0\sim\mathcal{N}(0.2,0.4^2)$ (unimodal) and
$\theta_0\sim\mathcal{N}(0,1)$.
Only $w$ is observed, through the moments $\mathbb{E}[w],\mathbb{E}[w^2],\mathbb{E}[w^4]$
($K=3$ test functions) at $M=5$ times in $[0.5,2.5]$, giving $MK=15$ constraints for $p=3$ parameters.
The flow has no closed form; the per-particle sensitivities are propagated by the
forward variational equations integrated alongside the state, and the reference moments
are computed once from a large fixed ensemble.
At $\boldsymbol{\alpha}^\star$ the sensitivity Jacobian has condition number
$\kappa(\mathcal{S})\approx31$, moderate because the latent decay $\lambda$ is imprinted
on the observed moments only during the transient.

\subsubsection{Parametric recovery}

The gradient and Jacobian validation (Figure~\ref{fig:bimod_validation}) confirms the
Monte Carlo behavior on this nonlinear flow: the gradient standard deviation and the
empirical Jacobian both converge at $N^{-1/2}$ (fitted slopes $-0.50$), with the bias
within the Monte Carlo standard error.
Figure~\ref{fig:bimod_convergence} reports the recovery of $\boldsymbol{\alpha}^\star$ by
Algorithm~\ref{alg:crossed-sgd} from $\boldsymbol{\alpha}_0=(1.3,0.5,1.4)$ over
$2\times10^4$ iterations, eight seeds, at $N\in\{64,256,1024\}$.
The parameter error decays toward the $\ell^{-1/2}$ rate, reaching
$\|\boldsymbol{\alpha}_\ell-\boldsymbol{\alpha}^\star\|\approx3\times10^{-2}$;
the weak-loss floor in figure \ref{fig:bimod_convergence}~$(a)$ is set by the Monte Carlo error of the finite
reference ensemble rather than by the optimizer.
The moderate conditioning lengthens the transient relative to the well-conditioned
Experiments 1 and 2, but Algorithm~\ref{alg:crossed-sgd} recovers all three physical
parameters, including the latent decay $\lambda$, from the observed marginal of $w$
alone.

\begin{figure}[h!]
    \centering
    \includegraphics[width=\linewidth]{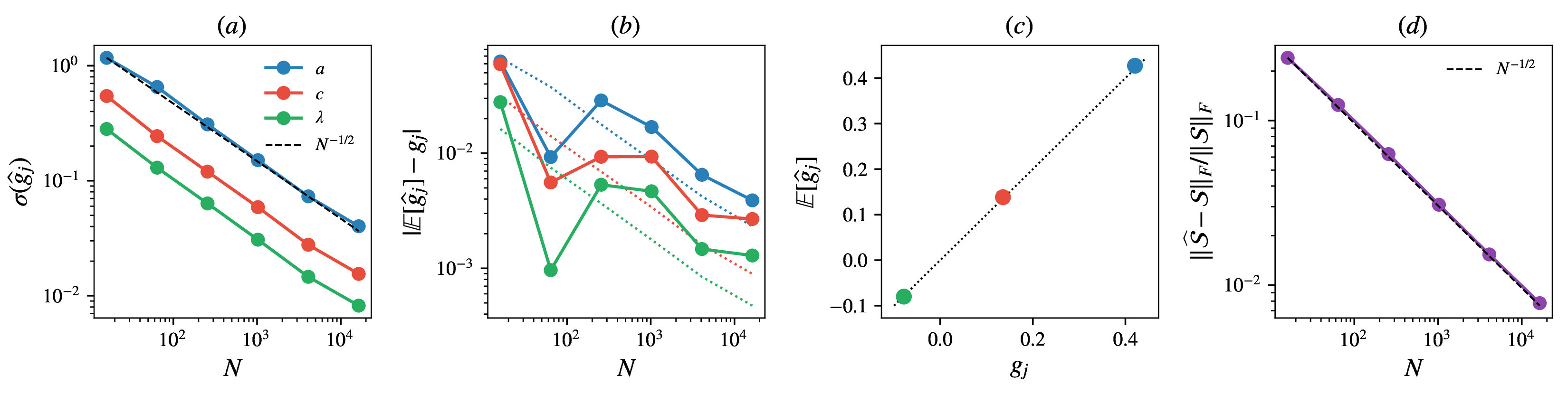}
    \vspace{-20pt}
    \caption{Validation of $\widehat{\boldsymbol{g}}$ and $\widehat{\mathcal{S}}$ for the
    bistable-latent model at $\boldsymbol{\alpha}_{\rm test}$, $n_{\rm trials}=300$
    sets of characteristics per $N$.  Layouts same as in figure~\ref{fig:osc_validation}.}
    \label{fig:bimod_validation}
\end{figure}

\begin{figure}[h!]
    \centering
    \includegraphics[width=\linewidth]{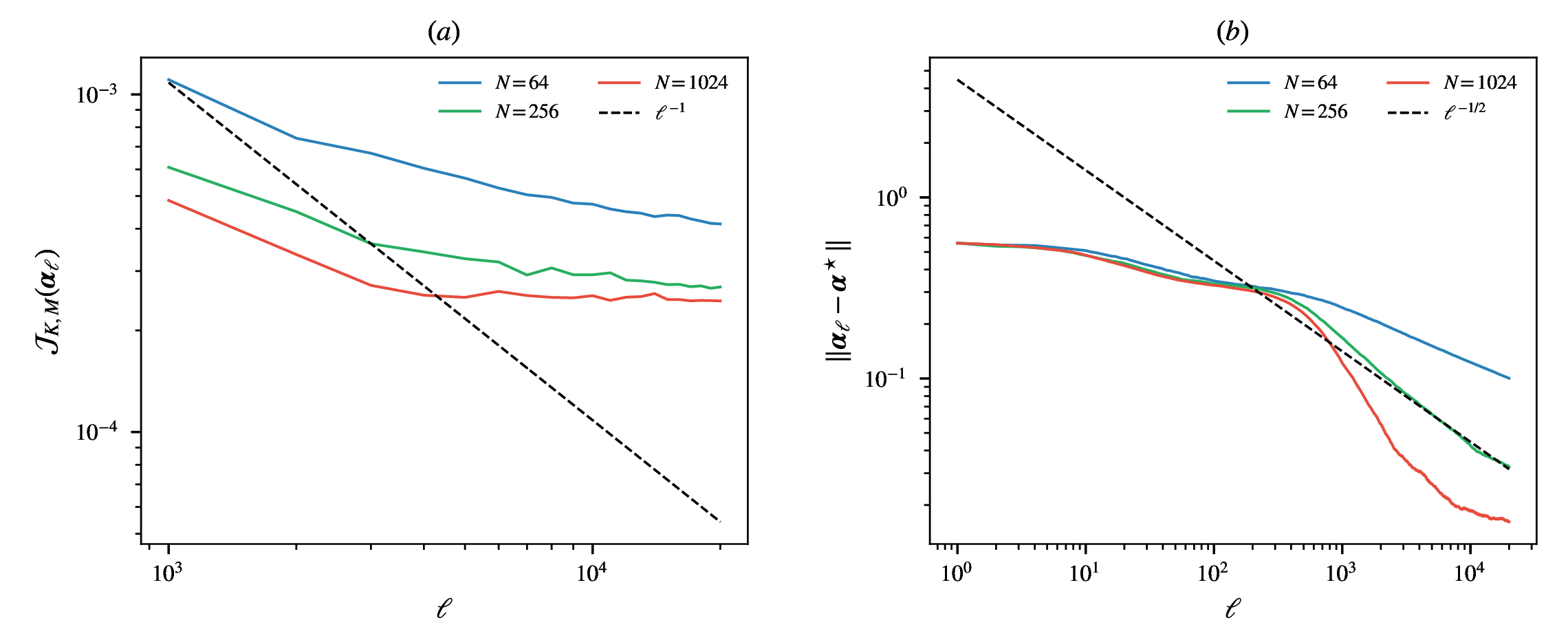}
    \vspace{-8pt}
    \caption{Recovery of $\boldsymbol{\alpha}^\star=(1,0.8,1)$ by
    Algorithm~\ref{alg:crossed-sgd} on the bistable-latent model, step
    $\eta_\ell=0.3/(\ell+50)$, $2\times10^4$ iterations, $N\in\{64,256,1024\}$, eight seeds.
    $(a)$~weak loss versus $\ell$.  $(b)$~parameter error versus $\ell$ against the
    $\ell^{-1/2}$ guide line.}
    \label{fig:bimod_convergence}
\end{figure}

\subsubsection{Neural-ODE black-box recovery}
\label{subsub:neural_bimodal}

The parametric recovery assumes the model form~\eqref{eq:bimodal_ode} is known.
To test whether the hidden mode is genuinely required, rather than an artifact of that
form, we discard the form and fit the observed marginals with two black-box vector
fields trained by Algorithm~\ref{alg:crossed-sgd}.
The first is a one-dimensional autonomous field $\dot{w}=\mathrm{NN}(w)$; the second
augments the state with a single latent coordinate initialized as
$z_0\sim\mathcal{N}(0,1)$ and learns the autonomous field
$(\dot{w},\dot{z})=\mathrm{NN}(w,z)$.
Both networks have two hidden layers of width $32$ with $\tanh$ activations, are
integrated by RK4, and are trained on the same test functions
$\mathbb{E}[w],\mathbb{E}[w^2],\mathbb{E}[w^4]$ at the same $M=5$ times; only the
marginal of $w$ enters the loss.
The crossed estimator~\eqref{eq:crossed-estimator} is realized here by a split
construction: one set of characteristics supplies the detached moment misfit and an independent set the
differentiated moments, so the preconditioned gradient remains unbiased.

Figure~\ref{fig:neural_bimodal} reports the outcome over four seeds for each field.
Both fields reduce the trained moments to a comparable weak loss of order $10^{-3}$, but the marginals they recover differ sharply.
We measure the discrepancy between a recovered marginal and the true marginal of $w$ by the
$1$-Wasserstein distance \cite{villani2009optimal}: for two probability measures $\mu$ and $\nu$ on
$\mathbb{R}$,
\begin{equation}
\label{eq:wasserstein}
W_1(\mu,\nu)
:=
\inf_{\gamma\in\Gamma(\mu,\nu)}\int_{\mathbb{R}^2}|x-y|\,d\gamma(x,y)
=
\int_{\mathbb{R}}\bigl|F_\mu(x)-F_\nu(x)\bigr|\,dx,
\end{equation}
where $\Gamma(\mu,\nu)$ is the set of couplings with marginals $\mu$ and $\nu$, and $F_\mu,F_\nu$ are their cumulative distribution functions; the second equality is the closed form valid on $\mathbb{R}$.
It is the minimal cost of transporting one distribution onto the other under the Euclidean ground cost, and it metrizes convergence in distribution, so a small $W_1$ certifies that
the whole marginal shape, not merely the trained moments, is matched.
The one-dimensional field is less accurate and worsens with time: its $W_1$ distance to the true
marginal grows from $0.12$ to $0.28$ across the observation window, about three times that
of the latent field, whose $W_1$ stays near $0.08$, and its density at the final time
misplaces the two peaks.
Matching a few moments does not pin down the flow; a scalar autonomous field can meet
the moment targets while reconstructing the wrong marginal sequence, whereas adding one latent
dimension improves reconstruction of the sequence the hidden mode generates.

\begin{figure}[h!]
    \centering
    \includegraphics[width=\linewidth]{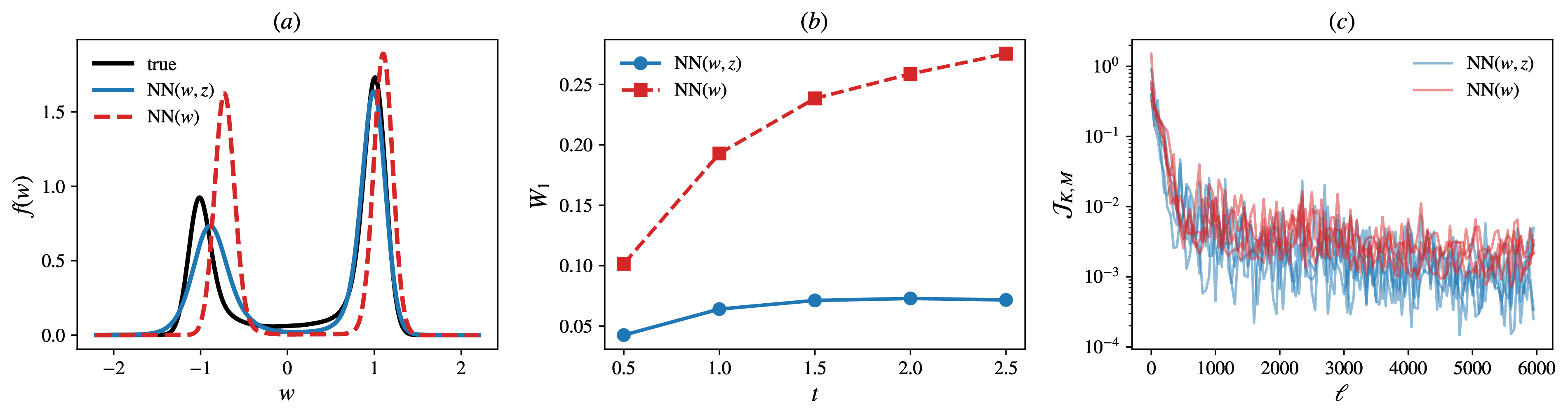}
    \vspace{-8pt}
    \caption{Black-box recovery of the observed marginal of $w$ by a one-dimensional
    autonomous field $\dot{w}=\mathrm{NN}(w)$ and a latent field
    $(\dot{w},\dot{z})=\mathrm{NN}(w,z)$, both trained by Algorithm~\ref{alg:crossed-sgd}
    on $\mathbb{E}[w],\mathbb{E}[w^2],\mathbb{E}[w^4]$ at $M=5$ times, four seeds each.
    $(a)$~learned marginal of $w$ at $t=2.5$ against the true marginal.
    $(b)$~Wasserstein distance of the learned marginal to the true marginal versus time,
    for a single representative seed of each field.
    $(c)$~weak loss on the trained moments versus iteration.}
    \label{fig:neural_bimodal}
\end{figure}

\subsection{Experiment 4: Particle-in-flow drag law inference}
\label{subsec:particle_drag}

Consider a particle suspended in a prescribed cellular flow, subject to a drag force with unknown constitutive law.
This experiment tests the method on a five-dimensional state space with degenerate observability: only the particle's position is directly observed, yet the drag parameters govern the velocity dynamics.

\subsubsection{Model description}

A point particle at position $\boldsymbol{x}_p \in \mathbb{R}^2$ with velocity $\boldsymbol{u}_p \in \mathbb{R}^2$ moves in a steady incompressible flow,
\begin{equation}
    \boldsymbol{u}(\boldsymbol{x}) = (\sin 2\pi x_1 \cos 2\pi x_2,\, -\cos 2\pi x_1 \sin 2\pi x_2)^\top
\end{equation}
The drag coupling introduces a scalar efficiency factor $\chi$, giving a five-dimensional state $\boldsymbol{q} = (\boldsymbol{x}_p, \boldsymbol{u}_p, \chi) \in \mathbb{R}^5$:
\begin{equation}
\label{eq:particle_ode}
\dot{\boldsymbol{x}}_p = \boldsymbol{u}_p, \qquad
\dot{\boldsymbol{u}}_p = \chi\, F(|\boldsymbol{u} - \boldsymbol{u}_p|)\, \frac{\boldsymbol{u} - \boldsymbol{u}_p}{|\boldsymbol{u} - \boldsymbol{u}_p|}, \qquad
\dot{\chi} = 0,
\end{equation}
where $F(s) = a\,s + b\,s^2$ is the Stokes--Oseen drag law with unknown parameters $\boldsymbol{\alpha} = (a, b)$.
In the notation of the general formulation~\eqref{eqn:ODE}, the state is $\boldsymbol{q} = (\boldsymbol{x}_p, \boldsymbol{u}_p, \chi)$ and the vector field, autonomous because the background flow $\boldsymbol{u}$ is steady, is
\[
\boldsymbol{R}(\boldsymbol{q};\boldsymbol{\alpha}) = \Bigl(\boldsymbol{u}_p,\;\; \chi\, F(|\boldsymbol{u} - \boldsymbol{u}_p|)\, \tfrac{\boldsymbol{u} - \boldsymbol{u}_p}{|\boldsymbol{u} - \boldsymbol{u}_p|},\;\; 0\Bigr),
\]
so the drag parameters enter $\boldsymbol{R}$ only through the velocity block.
The initial distribution is $\boldsymbol{x}_p \sim \mathcal{N}(0, 0.2^2 I_2)$, $\boldsymbol{u}_p \sim \mathcal{N}(0, 0.1^2 I_2)$, $\chi \sim \mathcal{N}(1, 0.1^2)$.
Target moments are computed via five-dimensional Gauss--Legendre quadrature ($20^2 \times 10^2 \times 10 = 400\,000$ points) over the full initial distribution, at $M = 5$ observation times in $(0, 1]$.

\subsubsection{Observability and test function design}

The drag force acts on velocity, not position, but only position is observed.
We therefore use $K = 6$ position-only test functions,
\[
\{\,x_{p,1}^2,\;\; x_{p,2}^2,\;\; \cos(2\pi x_{p,1}),\;\; \cos(2\pi x_{p,2}),\;\; \cos(2\pi x_{p,1})\cos(2\pi x_{p,2}),\;\; x_{p,1}^4 + x_{p,2}^4\,\}.
\]
The second moments $\mathbb{E}[x_{p,j}^2]$ and the fourth moment $\mathbb{E}[x_{p,1}^4 + x_{p,2}^4]$ measure how far particles diffuse from the origin, which depends on how quickly the drag synchronizes $\boldsymbol{u}_p$ with $\boldsymbol{u}(\boldsymbol{x})$; the period-1 Fourier modes $\mathbb{E}[\cos(2\pi x_{p,j})]$ and the checkerboard $\mathbb{E}[\cos(2\pi x_{p,1})\cos(2\pi x_{p,2})]$ are sensitive to how particles localize within the cellular flow's vortex cells.
Together these probe the spatial structure that the drag parameters $(a, b)$ imprint on the position marginal.

\subsubsection{Results}

Starting from the initial guess $(a_0, b_0) = (0.5, 0.2)$ with true parameters $(a, b) = (1.0, 0.5)$, Algorithm~\ref{alg:crossed-sgd} with $N = 512$ particles per set and the Shampoo preconditioner reaches the truth's neighborhood within $\sim 50$ iterations.  Across $10^3$ iterations the trailing-mean estimate over the last $200$ steps is $(\hat a, \hat b) = (1.054, 0.493)$, biased by $5.4\%$ and $1.4\%$ respectively.
Figure~\ref{fig:particle_conv} shows the loss and parameter trajectories.

Figure~\ref{fig:particle_marginals} compares the position marginal at $t = T_{\max} = 1$ for the true, initial-guess, and recovered parameters.
Figure~\ref{fig:particle_moments} shows the six position-only moments as functions of time.

\begin{figure}[ht]
    \centering
    \includegraphics[width=0.85\linewidth]{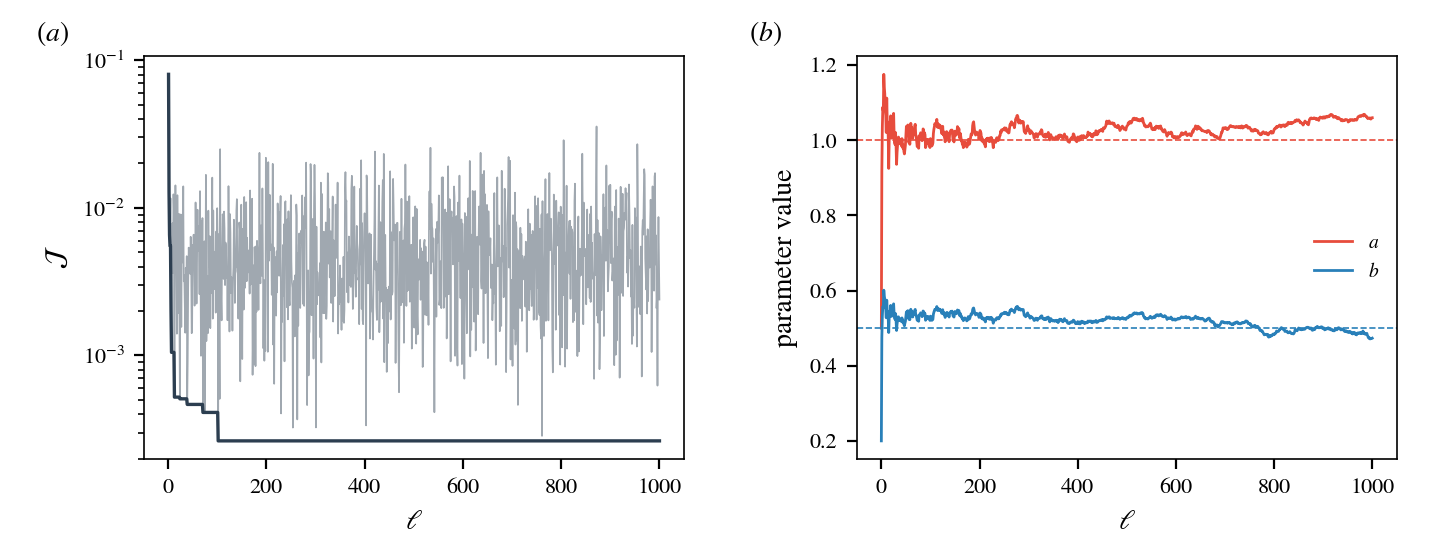}
    \caption{Particle drag-law inference with $F(s) = as + bs^2$, $N = 512$ particles per set.
    $(a)$~Loss versus iteration (gray: raw iterate; solid: running minimum).
    $(b)$~Parameter trajectories with dashed lines at the true values $(a, b) = (1, 0.5)$.}
    \label{fig:particle_conv}
\end{figure}

\begin{figure}[ht]
    \centering
    \includegraphics[width=\linewidth, trim={2 40 100 40}, clip]{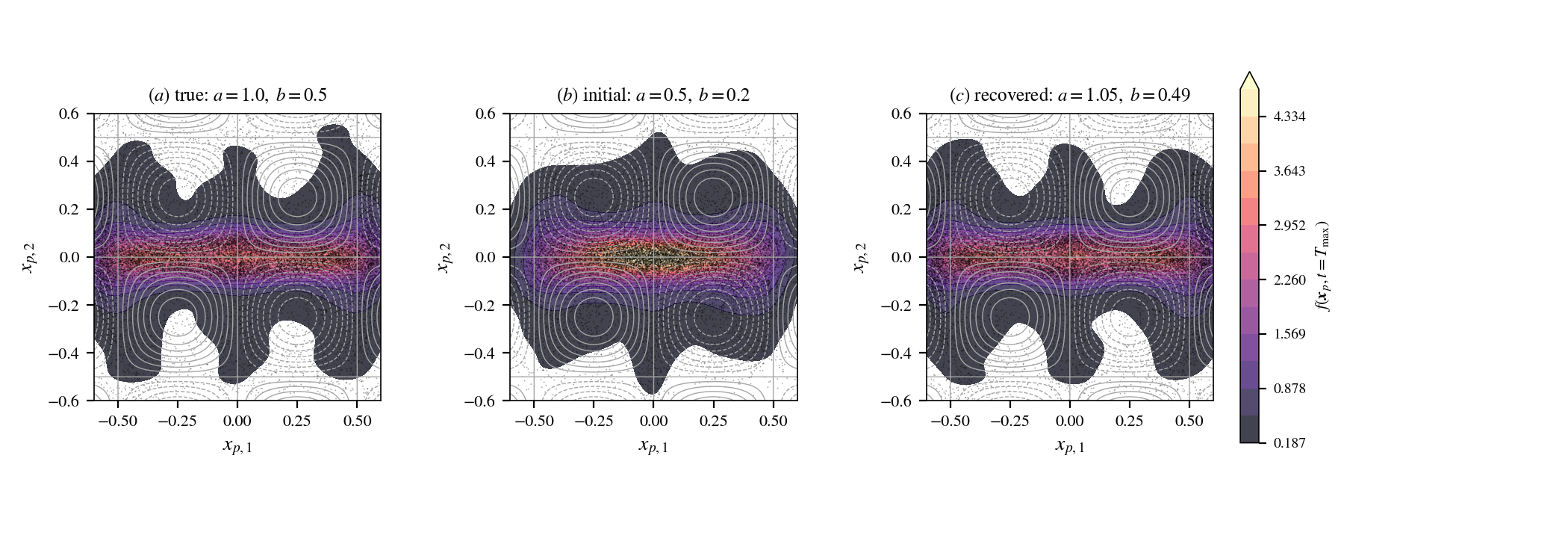}
    \vspace{-20pt}
    \caption{Particle position marginal at $t = 1$ under $(a)$~the true parameters, $(b)$~the initial guess, and $(c)$~the recovered parameters.}
    \label{fig:particle_marginals}
\end{figure}

\begin{figure}[ht]
    \centering
    \includegraphics[width=\linewidth]{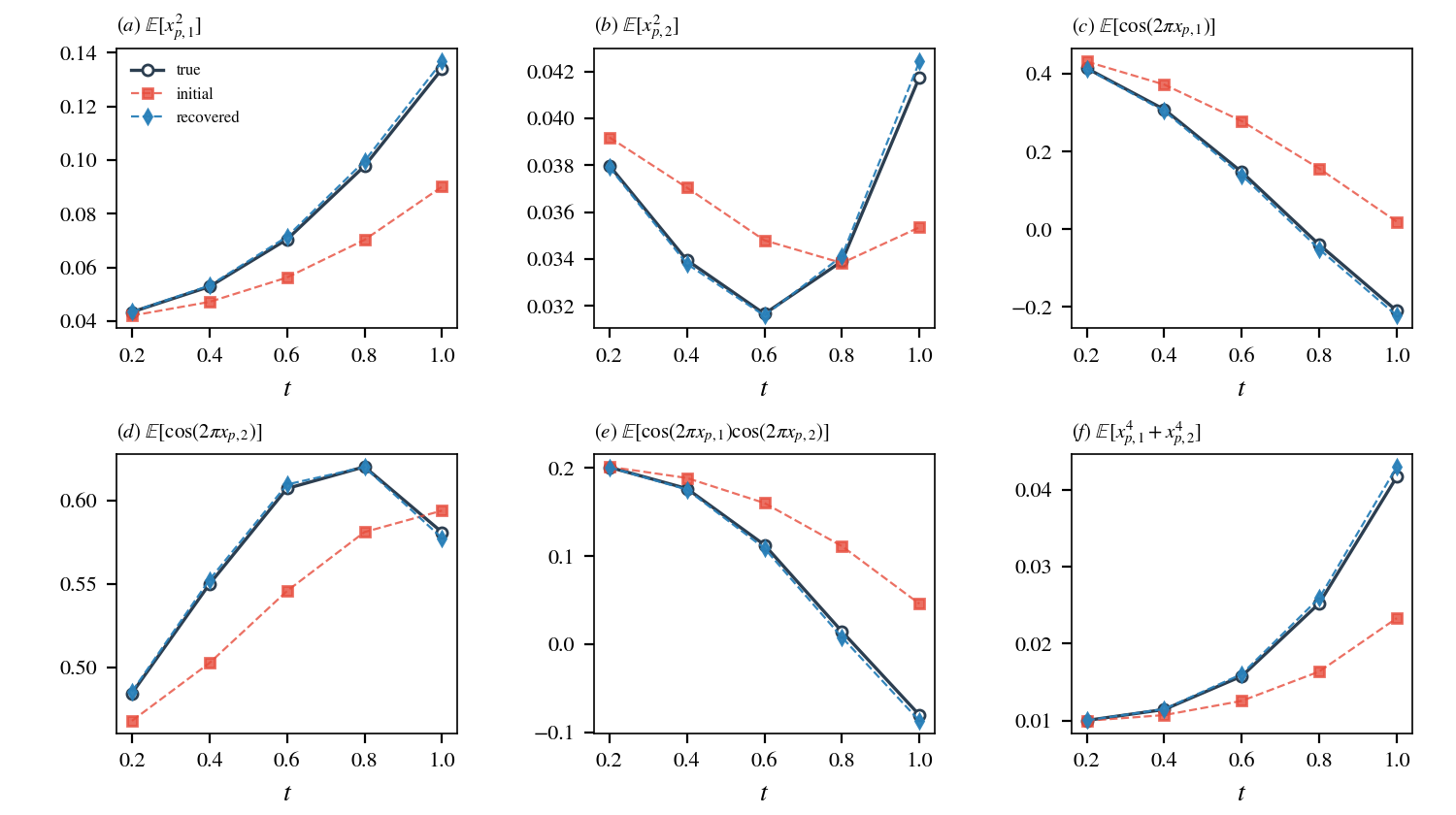}
    \caption{The six position-only observation moments versus time under the true parameters (black circles), the initial guess (red squares), and the recovered parameters (blue diamonds).}
    \label{fig:particle_moments}
\end{figure}

\section{Conclusion}
\label{sec:conclusion}

This paper has presented a stochastic gradient framework for parameter inference in dynamical systems from marginal distribution data.
By exploiting the characteristic representation of the Liouville equation, the method avoids discretizing the density PDE in full phase space and instead constructs unbiased gradient estimators via pathwise sensitivities
and a crossed U-statistic construction.

The theoretical analysis establishes that the gradient variance scales as $O(1/N)$ in the number of sampled characteristics and grows exponentially with the time
horizon at a rate governed by the Lipschitz constant of the vector field. The conditioning of the sensitivity Gramian, degraded both by chaotic amplification of the tangent dynamics and by marginalization over the latent coordinates, motivates the use of full-matrix preconditioning.

Four numerical experiments validated the framework.
The linear hidden-mode experiment ($d = 3$, three parameters, single observable) validated both halves of the asymptotic rate on a well-conditioned, uniquely identifiable system whose sensitivities, sensitivity Gramian, and identifiability are available in closed form.
The nonlinear Gompertz experiment carried the same convergence check to a genuinely nonlinear vector field for which the moments remain exactly known, recovering the true parameters at the predicted rate.
The bistable-latent experiment showed that a hidden mode can turn a unimodal initial marginal into a bimodal observed one, for which the tested scalar black-box model was substantially less accurate than the latent model,
and recovered the physical parameters, including the latent decay, from the observed marginal alone.
The particle-in-flow experiment ($d = 5$, Stokes--Oseen drag law) showed that position-only test functions can identify drag parameters acting through the unobserved velocity dynamics.

The main limitation is the exponential variance growth with time horizon~$T$, which restricts the method to moderate horizons or requires windowing strategies.
Natural extensions include variance reduction via control variates, adaptation to stochastic dynamics governed by the Fokker--Planck equation, and the development of adaptive or exponential integrators for stiff velocity fields.

\section*{Acknowledgments}
This work was supported by the Air Force Office of Scientific Research under award FA9550-23-1-0405 and by the National Science Foundation under awards NSF-2431610.
The authors used ChatGPT to assist with drafting and editing portions of the manuscript, with the derivation of a improved bound for the convergence in Section 4, the tikz schematic in Figure 2, and the problem setup of Experiment 3. All definitions, theorem statements, proofs, and numerical results were verified by the authors, who take full responsibility for the content of this paper.

\appendix

\bibliographystyle{siamplain}
\bibliography{references}

@article{natarajan2021high,
  title={A high-order semi-Lagrangian method for the consistent Monte-Carlo solution of stochastic Lagrangian drift--diffusion models coupled with Eulerian discontinuous spectral element method},
  author={Natarajan, H and Popov, PP and Jacobs, GB},
  journal={Computer Methods in Applied Mechanics and Engineering},
  volume={384},
  pages={114001},
  year={2021},
  publisher={Elsevier}
}

@article{dominguez2024high,
  title={High-order Lagrangian algorithms for Liouville models of particle-laden flows},
  author={Dom{\'\i}nguez-V{\'a}zquez, Daniel and Castiblanco-Ballesteros, Sergio A and Jacobs, Gustaaf B and Tartakovsky, Daniel M},
  journal={Journal of Computational Physics},
  volume={515},
  pages={113281},
  year={2024},
  publisher={Academic Press}
}

@Article{	  robbins1951stochastic,
  title		= {A stochastic approximation method},
  author	= {Robbins, Herbert and Monro, Sutton},
  journal	= {The annals of mathematical statistics},
  pages		= {400--407},
  year		= {1951},
  publisher	= {JSTOR}
}

@Book{		  kushner2003stochastic,
  title		= {Stochastic approximation and recursive algorithms and
		  applications},
  author	= {Kushner, Harold J and Yin, G George},
  year		= {2003},
  publisher	= {Springer}
}

@Article{	  lions1971optimal,
  title		= {Optimal control of systems governed by partial
		  differential equations},
  author	= {Lions, Jacques-Louis},
  journal	= {Springer},
  year		= {1971}
}

@Book{		  evensen2009data,
  title		= {Data Assimilation: The Ensemble Kalman Filter},
  author	= {Evensen, Geir},
  publisher	= {Springer},
  year		= {2009}
}

@Book{		  risken1996fokkerplanck,
  title		= {The Fokker--Planck Equation: Methods of Solution and
		  Applications},
  author	= {Risken, Hannes},
  publisher	= {Springer},
  year		= {1996}
}

@Article{	  sun2019moment,
  title		= {Moment closure approximations in stochastic dynamics},
  author	= {Sun, Jie and Bollt, Erik},
  journal	= {Physica D},
  volume	= {390},
  pages		= {1--19},
  year		= {2019}
}

@article{rubanova2019latent,
  title={Latent ordinary differential equations for irregularly-sampled time series},
  author={Rubanova, Yulia and Chen, Ricky TQ and Duvenaud, David K},
  journal={Advances in neural information processing systems},
  volume={32},
  year={2019}
}

@article{chen2018neural,
  title={Neural ordinary differential equations},
  author={Chen, Ricky TQ and Rubanova, Yulia and Bettencourt, Jesse and Duvenaud, David K},
  journal={Advances in neural information processing systems},
  volume={31},
  year={2018}
}

@article{anderson1982reverse,
  title={Reverse-time diffusion equation models},
  author={Anderson, Brian DO},
  journal={Stochastic Processes and their Applications},
  volume={12},
  number={3},
  pages={313--326},
  year={1982},
  publisher={Elsevier}
}

@article{kobyzev2020normalizing,
  title={Normalizing flows: An introduction and review of current methods},
  author={Kobyzev, Ivan and Prince, Simon JD and Brubaker, Marcus A},
  journal={IEEE transactions on pattern analysis and machine intelligence},
  volume={43},
  number={11},
  pages={3964--3979},
  year={2020},
  publisher={IEEE}
}

@article{papamakarios2021normalizing,
  title={Normalizing flows for probabilistic modeling and inference},
  author={Papamakarios, George and Nalisnick, Eric and Rezende, Danilo Jimenez and Mohamed, Shakir and Lakshminarayanan, Balaji},
  journal={Journal of Machine Learning Research},
  volume={22},
  number={57},
  pages={1--64},
  year={2021}
}

@inproceedings{rombach2022high,
  title={High-resolution image synthesis with latent diffusion models},
  author={Rombach, Robin and Blattmann, Andreas and Lorenz, Dominik and Esser, Patrick and Ommer, Bj{\"o}rn},
  booktitle={Proceedings of the IEEE/CVF conference on computer vision and pattern recognition},
  pages={10684--10695},
  year={2022}
}

@Article{	  song2021score,
  title		= {Score-based generative modeling through stochastic
		  differential equations},
  author	= {Song, Yang and Sohl-Dickstein, Jascha and Kingma, Diederik
		  P. and Kumar, Abhishek and Ermon, Stefano and Poole, Ben},
  journal	= {International Conference on Learning Representations},
  year		= {2021}
}

@Article{	  song2020sde,
  title		= {Generative modeling by estimating gradients of the data
		  distribution},
  author	= {Song, Yang and Ermon, Stefano},
  journal	= {Advances in Neural Information Processing Systems},
  volume	= {33},
  year		= {2020}
}

@Article{	  leith1974theoretical,
  title		= {Theoretical skill of Monte Carlo forecasts},
  author	= {Leith, C. E.},
  journal	= {Monthly Weather Review},
  volume	= {102},
  pages		= {409--418},
  year		= {1974}
}

@Book{		  nagy1970harmonic,
  title		= {Harmonic Analysis of Operators on {Hilbert} Space},
  author	= {Sz.-Nagy, B\'ela and Foia\c{s}, Ciprian},
  publisher	= {North-Holland},
  year		= {1970}
}

@Book{		  breuer2002open,
  title		= {The Theory of Open Quantum Systems},
  author	= {Breuer, Heinz-Peter and Petruccione, Francesco},
  publisher	= {Oxford University Press},
  year		= {2002}
}

@Article{	  mori1965transport,
  title		= {Transport, Collective Motion, and {Brownian} Motion},
  author	= {Mori, Hazime},
  journal	= {Progress of Theoretical Physics},
  volume	= {33},
  number	= {3},
  pages		= {423--455},
  year		= {1965}
}

@Article{	  zwanzig1961memory,
  title		= {Memory Effects in Irreversible Thermodynamics},
  author	= {Zwanzig, Robert},
  journal	= {Physical Review},
  volume	= {124},
  number	= {4},
  pages		= {983--992},
  year		= {1961}
}

@Article{	  rabiner1989hmm,
  title		= {A tutorial on hidden {Markov} models and selected
		  applications in speech recognition},
  author	= {Rabiner, Lawrence R.},
  journal	= {Proceedings of the IEEE},
  volume	= {77},
  number	= {2},
  pages		= {257--286},
  year		= {1989}
}

@book{villani2009optimal,
  title={Optimal transport: old and new},
  author={Villani, C{\'e}dric and others},
  volume={338},
  year={2009},
  publisher={Springer}
}

@article{lee2019unbiased,
  title={Unbiased approximations of products of expectations},
  author={Lee, Anthony and Tiberi, Simone and Zanella, Giacomo},
  journal={Biometrika},
  volume={106},
  number={3},
  pages={708--715},
  year={2019},
  publisher={Oxford University Press}
}

@Article{	  bottou2018optimization,
  title		= {Optimization methods for large-scale machine learning},
  author	= {Bottou, L\'eon and Curtis, Frank E. and Nocedal, Jorge},
  journal	= {SIAM Review},
  volume	= {60},
  number	= {2},
  pages		= {223--311},
  year		= {2018}
}

@book{borkar2008stochastic,
  title={Stochastic approximation: a dynamical systems viewpoint},
  author={Borkar, Vivek S and Borkar, Vivek S},
  volume={100},
  year={2008},
  publisher={Springer}
}

@article{fabian1968asymptotic,
  title={On asymptotic normality in stochastic approximation},
  author={Fabian, Vaclav},
  journal={The Annals of Mathematical Statistics},
  pages={1327--1332},
  year={1968},
  publisher={JSTOR}
}

@book{murray2013existence,
  title={Existence theorems for ordinary differential equations},
  author={Murray, Francis J and Miller, Kenneth S},
  year={2013},
  publisher={Courier Corporation}
}

@Article{	  gupta2018shampoo,
  title		= {Shampoo: Preconditioned Stochastic Tensor Optimization},
  author	= {Gupta, Vineet and Koren, Tomer and Singer, Yoram},
  journal	= {International Conference on Machine Learning},
  year		= {2018}
}

@Article{	  kingma2014adam,
  title		= {Adam: A Method for Stochastic Optimization},
  author	= {Kingma, Diederik P. and Ba, Jimmy},
  journal	= {International Conference on Learning Representations},
  year		= {2015}
}

@article{10.1063/5.0207403,
    author = {Domínguez-Vázquez, Daniel and Jacobs, Gustaaf B. and Tartakovsky, Daniel M.},
    title = {Liouville models of particle-laden flow},
    journal = {Physics of Fluids},
    volume = {36},
    number = {6},
    pages = {063303},
    year = {2024},
    month = {06},
    issn = {1070-6631},
    doi = {10.1063/5.0207403},
    url = {https://doi.org/10.1063/5.0207403},
    eprint = {https://pubs.aip.org/aip/pof/article-pdf/doi/10.1063/5.0207403/19974821/063303_1_5.0207403.pdf},
}

@article{pavliotis2014stochastic,
  title={Stochastic processes and applications},
  author={Pavliotis, Grigorios A},
  journal={Texts in applied mathematics},
  volume={60},
  pages={41--43},
  year={2014},
  publisher={Springer}
}

@ARTICLE{nodozi2024neural,
  author={Nodozi, Iman and Yan, Charlie and Khare, Mira and Halder, Abhishek and Mesbah, Ali},
  journal={IEEE Transactions on Control Systems Technology}, 
  title={Neural Schrödinger Bridge With Sinkhorn Losses: Application to Data-Driven Minimum Effort Control of Colloidal Self-Assembly}, 
  year={2024},
  volume={32},
  number={3},
  pages={960-973},
  doi={10.1109/TCST.2023.3337588}}

@article{kgb6-k3zm,
  title = {Localization of sources in weakly nonlinear fluid systems using linear and quadratic sensitivity analysis},
  author = {Wang, Qi and You, Zejian},
  journal = {Phys. Rev. Fluids},
  pages = {},
  year = {2026},
  month = {Mar},
  publisher = {American Physical Society},
  doi = {10.1103/kgb6-k3zm},
  url = {https://link.aps.org/doi/10.1103/kgb6-k3zm}
}

@article{farea2024understanding,
  title={Understanding physics-informed neural networks: Techniques, applications, trends, and challenges},
  author={Farea, Amer and Yli-Harja, Olli and Emmert-Streib, Frank},
  journal={Ai},
  volume={5},
  number={3},
  pages={1534--1557},
  year={2024},
  publisher={MDPI}
}

@article{benamou2000computational,
  title={A computational fluid mechanics solution to the Monge-Kantorovich mass transfer problem},
  author={Benamou, Jean-David and Brenier, Yann},
  journal={Numerische Mathematik},
  volume={84},
  number={3},
  pages={375--393},
  year={2000},
  publisher={Springer-Verlag Berlin/Heidelberg}
}

@article{Wang_Wang_Zaki_2022, 
title={What is observable from wall data in turbulent channel flow?}, 
volume={941}, 
DOI={10.1017/jfm.2022.295}, 
journal={Journal of Fluid Mechanics}, 
author={Wang, Qi and Wang, Mengze and Zaki, Tamer A.}, 
year={2022}, 
pages={A48}}

@article{Wang_Zaki_2025, 
title={Domain of dependence for wall-pressure measurements in high-speed boundary layers}, 
volume={1009}, DOI={10.1017/jfm.2025.224}, 
journal={Journal of Fluid Mechanics}, 
author={Wang, Qi and Zaki, Tamer A.}, 
year={2025}, 
pages={A67}}

@article{dominguez2022inference,
    author = {Domínguez-Vázquez, Daniel and Escobar-Castaneda, Nicolas and Wang, Qi and Jacobs, Gustaaf B.},
    title = {Inference of inertial particle dynamics from limited measurements},
    journal = {Physics of Fluids},
    volume = {37},
    number = {4},
    pages = {043361},
    year = {2025},
    month = {04},
    issn = {1070-6631},
    doi = {10.1063/5.0257997},
}

@InProceedings{lipman2023flow,
  title     = {Flow Matching for Generative Modeling},
  author    = {Lipman, Yaron and Chen, Ricky T. Q. and Ben-Hamu, Heli and Nickel, Maximilian and Le, Matt},
  booktitle = {International Conference on Learning Representations (ICLR)},
  year      = {2023},
}

@article{daniel2024liouville,
    author = {Domínguez-Vázquez, Daniel and Jacobs, Gustaaf B. and Tartakovsky, Daniel M.},
    title = {Liouville models of particle-laden flow},
    journal = {Physics of Fluids},
    volume = {36},
    number = {6},
    pages = {063303},
    year = {2024},
    month = {06},
    issn = {1070-6631},
    doi = {10.1063/5.0207403},
}

@InProceedings{liu2016stein,
  title     = {Stein Variational Gradient Descent: A General Purpose {B}ayesian Inference Algorithm},
  author    = {Liu, Qiang and Wang, Dilin},
  booktitle = {Advances in Neural Information Processing Systems (NeurIPS)},
  year      = {2016},
}

@article{cranmer2020frontier,
  title={The frontier of simulation-based inference},
  author={Cranmer, Kyle and Brehmer, Johann and Louppe, Gilles},
  journal={Proceedings of the National Academy of Sciences},
  volume={117},
  number={48},
  pages={30055--30062},
  year={2020},
  publisher={National Academy of Sciences}
}

@book{birdsall,
  title={Plasma Physics via Computer Simulation},
  author={Birdsall, Charles K and Langdon, A Bruce},
  year={2004},
  publisher={CRC Press}
}

@article{hoeffding1948ustatistic,
  title={A class of statistics with asymptotically normal distribution},
  author={Hoeffding, Wassily},
  journal={The Annals of Mathematical Statistics},
  volume={19},
  number={3},
  pages={293--325},
  year={1948},
  publisher={Institute of Mathematical Statistics}
}
\end{document}